\documentclass[a4paper, 11pt]{amsart}
\usepackage{amsmath,amsthm,amssymb,amsfonts,color}
\usepackage[arrow,matrix]{xy}
\usepackage{pst-node}
\usepackage{tikz}
\usepackage{etex}

\usepackage{soul}

\usetikzlibrary{arrows,snakes,backgrounds}

\theoremstyle{plain}
\newtheorem{theorem}{Theorem}[section]
\newtheorem{proposition}[theorem]{Proposition}
\newtheorem{lemma}[theorem]{Lemma}
\newtheorem{corollary}[theorem]{Corollary}
\numberwithin{equation}{section}

\theoremstyle{definition}
\newtheorem{definition}[theorem]{Definition}
\newtheorem{remark}[theorem]{Remark}

\newtheorem*{SCRPT}{Strong cohomological rigidity problem for toric varieties}

\newcommand{\C}{\mathbb{C}}

\newcommand{\Q}{\mathbb{Q}}
\newcommand{\R}{\mathbb{R}}
\newcommand{\Z}{\mathbb{Z}}

\newcommand{\CP}{\mathbb{C}P}
\newcommand{\RP}{\mathbb{R}P}

\newcommand{\CC}{\underline{\mathbb{C}}}

\newcommand{\va}{\mathbf{a}}
\newcommand{\vb}{\mathbf{b}}
\newcommand{\vs}{\mathbf{s}}
\newcommand{\vr}{\mathbf{r}}
\newcommand{\vq}{\mathbf{q}}

\newcommand{\vlambda}{\boldsymbol{\lambda}}

\DeclareMathOperator{\Hom}{Hom}
\DeclareMathOperator{\Aut}{Aut}
\DeclareMathOperator{\GL}{GL}

\newcommand{\betti}{\beta}

\begin{document}
\title[Strong Cohomological rigidity of toric varieties]{Strong Cohomological rigidity of toric varieties}
\author[S.Choi]{Suyoung Choi}
\address{Department of Mathematics, Ajou University, San 5, Woncheondong, Yeongtonggu, Suwon 443-749, Korea}
\email{schoi@ajou.ac.kr}

\author[S.Park]{Seonjeong Park}
\address{Division of Mathematical Models, National Institute for Mathematical Sciences, 70 Yuseong-daero 1689beon-gil, Yuseong-gu, Daejeon 305-811, Korea}
\email{seonjeong1124@gmail.com}

\thanks{The first author was supported by Basic Science Research Program through the National Research Foundation of Korea(NRF) funded by the Ministry of Education (NRF-2011-0024975), and the TJ Park Science Fellowship funded by POSCO TJ Park Foundation.
}
\subjclass[2010]{Primary 57S25, 57R19, 14M25}
\keywords{Quasitoric manifold, Complete non-singular toric variety, Strong cohomological rigidity, Toric topology}

\date{\today}
\begin{abstract}
     Every cohomology ring isomorphism between two non-singular complete toric varieties and quasitoric manifolds, respectively, with second Betti number $2$ is realizable by a diffeomorphism and homeomorphism, respectively.
\end{abstract}
\maketitle

\section{Introduction}
    A \emph{toric variety} is a normal algebraic variety of complex dimension $\ell$ with an action of the algebraic torus $(\C^\ast)^\ell$ having an open dense orbit. A typical example of a non-singular complete toric variety is the projective space $\CP^\ell$ of complex dimension $\ell$ with the standard action of $(\C^\ast)^\ell$.

    The \emph{cohomological rigidity problem} for toric varieties poses the question as to whether two non-singular complete toric varieties are diffeomorphic if their cohomology rings are isomorphic as graded rings. Although a cohomology ring is known to be a weak invariant even under homotopy equivalence, no example able to refute the problem has been found yet. On the contrary, many results have been produced in support of the affirmative answers to the problem.
 One of the remarkable results on this topic is that two non-singular complete toric varieties with second Betti number $2$ (or Picard number $2$) are diffeomorphic if and only if their cohomology rings are isomorphic as graded rings, see \cite{ch-ma-su10b}.
 We refer the reader to a survey paper \cite{ch-ma-su11} on this problem.

    On the other hand, it is possible to pose a stronger version of the cohomological rigidity problem for toric varieties as follows.  Throughout this paper, $H^\ast(X)$ denotes the integral cohomology ring of a topological space $X$.

    \begin{SCRPT}
        Let $M$ and $M'$ be non-singular complete toric varieties. If $\varphi$ is a graded ring isomorphism from $H^\ast(M)$ to $H^\ast(M')$, does a diffeomorphism capable of inducing the isomorphism $\varphi$ exist?
    \end{SCRPT}

    The projective space $\CP^1$ is the only non-singular complete toric variety of complex dimension $1$, and it is easy to show that every cohomology ring automorphism is realizable by a diffeomorphism. 
    For the complex dimension $\ell=2$,
    every toric variety of complex dimension $2$ admits a canonical action of the $2$-dimensional compact torus $T^2=(S^1)^2\subset (\C^\ast)^2$.  Orlik and Raymond \cite{OR} showed that real $4$-dimensional compact manifolds which admit well-behaved actions of $T^2$ can be expressed as connected sums of copies of $\CP^2$, $\overline{\CP^2}$ and $\CP^1 \times \CP^1$, and such manifolds are classified by their cohomology rings up to diffeomorphism, where $\overline{\CP^2}$ denotes $\CP^2$ with reversed orientation. According to Wall \cite{Wall64}, each cohomology ring automorphism of such a manifold of dimension $4$ with second Betti number $\betti_2 \leq 10$ is induced by a diffeomorphism. Hence,  one can conclude that the answer to the strong cohomological rigidity problem is affirmative for complex $2$-dimensional non-singular complete toric varieties with $\betti_2 \leq 10$.

    However, the negative answer is also known. For instance, not every cohomology ring automorphism is realizable by a diffeomorphism for a complex $2$-dimensional non-singular complete toric varieties with $\betti_2>10$,
    see \cite{FM}.
    Furthermore, this implies that
    the answer to the strong cohomological rigidity problem for toric varieties of arbitrary dimension with sufficiently large
    $\betti_2$ may be negative.
    Hence, it is reasonable to pose the strong cohomological rigidity problem for toric varieties of arbitrary dimension $\ell$ with small $\betti_2$.
    We note that, because a non-singular complete toric variety with $\betti_2=1$ is the complex projective space $\CP^\ell$, and every automorphism of $H^\ast(\CP^\ell)$ is induced by a diffeomorphism on $\CP^\ell$, the strong cohomological rigidity holds for non-singular complete toric varieties with $\betti_2=1$.

    The aim of the work presented in this paper is to
    study the strong cohomological rigidity problem for non-singular complete toric varieties with $\betti_2=2$.
    We show that the problem can be solved by demonstrating that every cohomology ring automorphism of these toric varieties is realizable by a diffeomorphism.
    Combining our result with the fact that non-singular complete toric varieties with $\betti_2 =2$ are smoothly classified by their cohomology rings \cite{ch-ma-su10b}, we have the following theorem.

    \begin{theorem}\label{thm:submain}
        Every cohomology ring isomorphism between non-singular complete toric varieties with second Betti number $2$ is realizable by a diffeomorphism.
    \end{theorem}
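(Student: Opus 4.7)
The plan is to reduce Theorem \ref{thm:submain} to a statement about automorphisms, and then to realize every such automorphism by exhibiting a concrete diffeomorphism. More precisely, by Kleinschmidt's classification \cite{K}, every non-singular complete toric variety $M$ with $\betti_2=2$ is a projective bundle $P(\uC \oplus \mathcal{O}(a_1) \oplus \cdots \oplus \mathcal{O}(a_{n_2}))$ over $\CP^{n_1}$ for suitable integers $a_1,\dots,a_{n_2}$. Since \cite{ch-ma-su10b} already shows that two such varieties with isomorphic cohomology rings are diffeomorphic, I may assume we have fixed a diffeomorphism $M \cong M'$ beforehand and reduce to the case $M = M'$; the problem then becomes: show every graded ring automorphism $\varphi$ of $H^\ast(M)$ is induced by a self-diffeomorphism of $M$.

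The first computational step is to write $H^\ast(M)=\Z[x,y]/\bigl(x^{n_1+1},\, y\prod_{i=1}^{n_2}(y+a_i x)\bigr)$ and determine explicitly, in terms of $(n_1,n_2,a_1,\dots,a_{n_2})$, which substitutions $x \mapsto \alpha x + \beta y$, $y \mapsto \gamma x + \delta y$ respect both relations. This is a finite case analysis: depending on whether $n_1 < n_2$, $n_1 = n_2$, or $n_1 > n_2$, and depending on linear dependencies among the $a_i$, the automorphism group takes different shapes, but in each case it is generated by a small list of elementary substitutions.

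The second and geometric step is to realize each generator of $\Aut(H^\ast(M))$ by a diffeomorphism using the standard toolbox of projective bundles. The available operations are: (i)~linear automorphisms of $\CP^{n_1}$ lifted to $M$, which account for $x \mapsto -x$ type symmetries; (ii)~permutation of the summands of the defining bundle, which permutes the linear factors of the second relation; (iii)~the identity $P(E)\cong P(E\otimes L)$ for any line bundle $L$, which corresponds to the change of variables $y \mapsto y + c x$; and (iv)~fiberwise dualization $P(E)\cong P(E^\ast)$ combined with (iii), which gives a ``flip'' symmetry when the sequence $(0,a_1,\dots,a_{n_2})$ is preserved up to sign by such a shift. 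When additionally $n_1=n_2$ and the bundle is trivializable after a shift (so $M \cong \CP^{n_1}\times\CP^{n_2}$, or more generally when base and fiber exchange roles), one gains the interchange $x \leftrightarrow y$ from swapping the two projections.

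The main obstacle I expect is precisely matching items (i)--(iv) to the complete list of ring automorphisms found in the first step: one must verify that no ``exotic'' algebraic substitution of the generators remains unaccounted for. This verification comes down to checking that any substitution preserving the ideal must have its matrix $\begin{pmatrix}\alpha&\beta\\\gamma&\delta\end{pmatrix}$ of a very restricted form (lower/upper-triangular up to a determined sign), essentially because $x^{n_1+1}=0$ forces $\beta = 0$ unless special coincidences between $n_1,n_2$ and the $a_i$ occur, and these exceptional coincidences are exactly the cases for which (ii) or the fiber-base swap becomes available. Once this dictionary is set up, each algebraic automorphism is lifted to a composition of the geometric operations above, completing the proof.
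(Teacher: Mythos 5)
Your overall architecture --- reduce via the classification of \cite{ch-ma-su10b} to realizing automorphisms of $H^\ast(M)$ for a single two-stage generalized Bott manifold $M=P(\uC\oplus\gamma^{a_1}\oplus\cdots\oplus\gamma^{a_m})$ over $\CP^n$, compute $\Aut(H^\ast(M))$ case by case, and realize the ``flip'' automorphism by fiberwise dualization $P(E)\cong P(E^\ast)$ combined with tensoring by a line bundle --- coincides with the paper's strategy for the generic cases (this is exactly step (II) of the paper's Proposition~\ref{prop:two-stage GB}). But there is a genuine gap in your claimed dictionary. You assert that an automorphism can fail to preserve the subring $H^\ast(\CP^n)$ (i.e.\ have $\beta\neq 0$) only when summand permutations or a base--fiber swap are available. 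This is false: for $E=\uC\oplus\gamma^{a}$ with $a\in\{1,2\}$ and $n>1$ odd, the ring $\Z[x_1,x_2]/\langle x_1^{n+1},\,x_2(ax_1+x_2)\rangle$ admits the automorphism $x_1\mapsto x_1+\tfrac{2}{a}x_2$, $x_2\mapsto -x_2$ (one checks $(x_1+\tfrac{2}{a}x_2)^{n+1}=0$ precisely because $n$ is odd), yet $M$ is not a product, is not a projective bundle over $\CP^1$ (its degree-two part contains no square-zero element when $n>1$), and has no repeated summands to permute. Every operation in your toolbox (i)--(iv) covers a diffeomorphism of the base or swaps base and fiber, hence sends $x_1$ to $\pm x_1$ outside the product case; none can produce $x_1\mapsto x_1+\tfrac{2}{a}x_2$.

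This exceptional family is exactly where the paper must do work that lies outside the projective-bundle formalism (all of its Section~\ref{sec:wps and connected sum}): it identifies $P(\uC\oplus\gamma^{a})$, $a=1,2$, with the connected sum of weighted projective spaces $\CP^{n+1}_a\#\overline{\CP^{n+1}_a}$ glued along the lens space $S^{2n+1}/\mu_a$, and realizes the extra automorphism by combining complex conjugation on one summand with the involution $[z_0,\ldots,z_{n+1}]\mapsto[-z_0,\ldots,-z_{k-1},z_k,\ldots,z_{n+1}]$, $k=\tfrac{n+1}{2}$, on the other; the latter only exists as a map of the smooth connected sum because $a\leq 2$. Without a replacement for this construction your argument does not close. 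A secondary, fixable slip: linear automorphisms of $\CP^{n}$ act trivially on cohomology, so $x_1\mapsto -x_1$ must come from complex conjugation, and lifting conjugation to $P(E)$ already requires identifying the conjugate bundle with $E^\ast$; this should be stated correctly even though it causes no real trouble.
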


    The notion of a quasitoric manifold was introduced in \cite{DJ} as a topological analogue of a non-singular projective toric variety. A \emph{quasitoric manifold} $M$ is a $2\ell$-dimensional compact smooth manifold with a locally standard $T^\ell$-action whose orbit space can be identified with a simple polytope $P$. Every complex $\ell$-dimensional non-singular projective toric variety with a restricted action of $(\C^\ast)^\ell$ to $T^\ell$ is a quasitoric manifold of dimension $2\ell$. It is noteworthy to remark that every non-singular complete toric variety with $\betti_2=2$ is projective, and hence, it is a quasitoric manifold.
    However, not all quasitoric manifolds can be toric varieties. For example, an equivariant connected sum $\CP^2\#\CP^2$ of two $\CP^2$'s with an appropriate $T^2$-action is a quasitoric manifold with an orbit space a square $\Delta^1 \times \Delta^1$, although it is not a toric variety because it does not admit an almost complex structure. Hence, the class of quasitoric manifolds is larger than that of non-singular projective toric varieties\footnote{In theory, a non-singular non-projective complete toric variety may fail to be a quasitoric manifold.}.

    Note that quasitoric manifolds with $\betti_2 =2$ are topologically classified by their cohomology rings \cite{CPS12}. In this work, we also investigate the strong cohomological rigidity for quasitoric manifolds as follows.
    \begin{theorem}\label{thm:main}
        Every cohomology ring isomorphism between two quasitoric manifolds with second Betti number $2$ is realizable by a homeomorphism.
    \end{theorem}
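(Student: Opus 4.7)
The plan is to reduce Theorem \ref{thm:main} to proving that every graded ring automorphism of $H^\ast(M)$ is induced by a self-homeomorphism of $M$. Indeed, by the topological classification of quasitoric manifolds with $\betti_2 = 2$ established in \cite{CPS12}, any cohomology ring isomorphism $\varphi \colon H^\ast(M) \to H^\ast(M')$ comes accompanied by some homeomorphism $h\colon M \to M'$ realizing some isomorphism $\psi$. Then $\varphi \circ \psi^{-1}$ lies in $\Aut(H^\ast(M'))$, and realizing it by a self-homeomorphism $g$ of $M'$ yields $g \circ h$ as the desired realization of $\varphi$. This parallels the reduction behind Theorem \ref{thm:submain}, so much of the algebraic bookkeeping for $\Aut(H^\ast(M))$ can be carried over from the toric case.

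Next I would use the fact that $\betti_2 = 2$ forces the orbit polytope $P$ to be the product $\Delta^{n_1} \times \Delta^{n_2}$ of two simplices, since $P$ is simple with $n_1+n_2+2$ facets. Consequently $M$ is a topological analogue of a generalized Bott manifold of height $2$: it fibers as a twisted $\CP^{n_1}$-bundle over $\CP^{n_2}$. After reducing the characteristic matrix to a normal form, $H^\ast(M)$ takes the shape $\Z[x,y]$ modulo two relations depending on an integer twisting vector. The resulting case-by-case description of $\Aut(H^\ast(M))$, essentially the same as the one needed for Theorem \ref{thm:submain}, produces a small list of elementary generators: sign involutions $x \mapsto -x$ and $y \mapsto -y$; triangular changes of basis $y \mapsto y + k x$; and, in symmetric cases, a swap $x \leftrightarrow y$.

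For the realization step, each generator should be induced by an explicit geometric self-map of $M$. Fibrewise complex conjugation supplies the sign involutions; reinterpreting the $\CP^{n_1}$-bundle with an equivalent classifying data realizes the triangular change of basis; and in the symmetric case, either an interchange of the two base directions of $\Delta^{n_1} \times \Delta^{n_1}$ or a bundle-swap homeomorphism between \emph{a priori} distinct bundle structures provides the swap. In the toric setting of Theorem \ref{thm:submain} these self-maps are diffeomorphisms, while in the purely quasitoric setting some of them are only topological — which is precisely why the present conclusion is in terms of homeomorphism rather than diffeomorphism.

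The hardest step I anticipate is the realization of the swap-type automorphism in cases where $M$ is not itself a product $\CP^{n_1}\times\CP^{n_1}$. There one cannot simply interchange factors, and the cohomological symmetry $x \leftrightarrow y$ must be realized by a genuinely topological self-homeomorphism; its existence is guaranteed abstractly by \cite{CPS12}, but it must be made to intertwine the correct generators $x$ and $y$ on the nose. A related subtlety is that certain automorphisms change the twisting parameters of the bundle; one has to verify that the transformed bundle has cohomology ring of the same standard form, and that the composition of elementary geometric moves realizes the prescribed automorphism exactly, not merely up to an inner automorphism of $H^\ast(M)$.
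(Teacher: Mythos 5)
Your opening reduction is exactly the paper's: use the classification theorem of \cite{CPS12} to produce a homeomorphism $f\colon M\to M'$, observe that $f^\ast\circ\varphi$ is an automorphism of $H^\ast(M)$, and realize that automorphism by a self-homeomorphism. The gap is in the middle of your argument, where you assert that $\betti_2=2$ forces $M$ to ``fiber as a twisted $\CP^{n_1}$-bundle over $\CP^{n_2}$.'' This is false in general, and the cases where it fails are precisely the ones that make the quasitoric theorem harder than the toric one. In the normal form \eqref{eq:char mx of M}, $M_{\va,\vb}$ carries a projective bundle structure (equivalently, is a two-stage generalized Bott manifold) only when $\va=\mathbf{0}$ or $\vb=\mathbf{0}$; by Theorem~\ref{thm:main of pjm} the remaining quasitoric manifolds are the $M_{\vs,\vr}$ with both twisting vectors nonzero, and these admit no such fibration. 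Consequently your entire realization toolkit --- fibrewise complex conjugation, reinterpreting classifying data, bundle-swap homeomorphisms --- has nothing to act on in exactly the cases that are not already covered by the toric result. The paper replaces it by two different constructions: for $n=1$ or $m=1$ it identifies $M_{2,\vr}$ with an equivariant connected sum $\CP^{n+1}_2\#\overline{\CP^{n+1}_2}$ or $\CP^{n+1}_2\#\CP^{n+1}_2$ of weighted projective spaces glued at their singular points, and realizes the (at most eight) automorphisms by conjugation involutions on each summand together with the reflection through the connect-sum sphere; for $n,m>1$ it writes $M_{\vs,\vr}$ as a quotient of the moment-angle manifold $S^{2n+1}\times S^{2m+1}$ by the free torus $K_{\vs,\vr}$ and exhibits explicit coordinatewise homeomorphisms (conjugation and permutation of coordinates) that preserve the $K_{\vs,\vr}$-orbits and descend. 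Without something playing the role of these constructions, your proof does not cover the non-Bott manifolds at all.

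Two smaller points. First, even in the genuine bundle case your phrase ``reinterpreting the $\CP^{n_1}$-bundle with an equivalent classifying data'' conceals the substantive step: one must show from the relation in $H^\ast(P(E))$ that $E^\ast\otimes\gamma^{-A}\cong E$, which requires the fact that Whitney sums of line bundles over $\CP^n$ are determined by their total Chern classes (Lemma~\ref{lem:total chern determine cb over CP}), and then compose the dual-bundle homeomorphism $P(E)\to P(E^\ast)$ with the tensoring map; this is where the actual work of Proposition~\ref{prop:two-stage GB} lies. Second, the swap $x\leftrightarrow y$ that you flag as the hardest step is largely a non-issue: for the non-Bott manifolds the automorphism groups computed in \cite{CPS12} are small and triangular in the $(x_1,x_2)$-basis, and the only swap-like map that occurs is the reflection $(u,v)\mapsto(v,u)$ between the two summands of the connected sum, which is realized for free by the gluing symmetry. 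The genuinely hard point is the absence of a bundle structure, not the symmetry of one.
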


        The remainder of this paper is organized as follows. In Section~\ref{sec:quasitoric manifolds with second betti 2}, we review the properties of quasitoric manifolds and the topological classification of quasitoric manifolds with $\betti_2=2$. In Section~\ref{sec:wps and connected sum}, we introduce the weighted projective space $\CP^{n+1}_a$ and obtain quasitoric manifolds over $\Delta^n\times\Delta^1$ by carrying out an equivariant connected sum $\CP^{n+1}_a\#\CP^{n+1}_a$ or $\CP^{n+1}_a\#\overline{\CP^{n+1}_a}$. By using this, we show that every cohomology ring automorphism of such a quasitoric manifold is realizable by a diffeomorphism. Section~\ref{sec:two-stage GB} discusses the realizability of a cohomology ring automorphism for a non-singular complete toric variety with $\betti_2=2$. In Section~\ref{sec:quasitoric over n x m}, we consider quasitoric manifolds over the product of simplices $\Delta^n\times \Delta^m$ which are not non-singular complete toric varieties. Finally, we complete the proofs of Theorems~\ref{thm:submain} and~\ref{thm:main} in Section~\ref{sec:proof of main}.

\section{Quasitoric manifolds with second Betti number 2}\label{sec:quasitoric manifolds with second betti 2}

    In this section, we first review the general properties of quasitoric manifolds from \cite{DJ}, \cite{BP} and \cite{ch-ma-su10a}.
    We partially focus on the case for which the second Betti number is $2$. In addition, we recall the classification results in \cite{ch-ma-su10b} and \cite{CPS12}.

    Let $M$ be a $2\ell$-dimensional quasitoric manifold over an $\ell$-dimensional simple polytope $P$ with $d$ facets (codimension-$1$ faces). Let $F$ be a $k$-dimensional face of $P$. Note that for the orbit map $\rho\colon M\to P$ and for a point $x\in\rho^{-1}(F^\circ)$, the isotropy subgroup at $x$ is independent of the choice of $x$ and is a codimension-$k$ subtorus of $T^\ell$, where $F^\circ$ denotes the interior of $F$.
    If $F$ is a facet of $P$, then $\rho^{-1}(F)$ is fixed by a circle subgroup of $T^\ell$.
    We define a function $\vlambda \colon \{F_1, \ldots, F_d\} \to \Hom(S^1 , T^\ell) \cong \Z^\ell$, known as the \emph{characteristic function of $M$}, such that $\vlambda(F_i)$ fixes the \emph{characteristic submanifold} $M_i:=\rho ^{-1}(F_i)$ for $i=1, \ldots, d$, where $\{F_1, \ldots, F_d\}$ is the set of facets of $P$. We note that $\vlambda$ satisfies the following \emph{non-singularity condition};
    \begin{equation} \label{eq:non-singularity condition}
        \begin{split}
            &\vlambda(F_{i_1}),\ldots,\vlambda(F_{i_\alpha}) \text{ form a part of an integral basis of }\Z^\ell\\
            &\text{whenever the intersection }F_{i_1}\cap\cdots\cap F_{i_\alpha}\text{ is non-empty}.
        \end{split}
    \end{equation}

    Conversely, let us consider a function $\vlambda \colon \{F_1, \ldots, F_d\} \to \Z^\ell$ satisfying \eqref{eq:non-singularity condition} and its matrix representation $\Lambda = \begin{pmatrix} \vlambda(F_1)&\cdots &\vlambda(F_d) \end{pmatrix}$, called a \emph{characteristic matrix}. For a characteristic matrix $\Lambda$ and a face $F$ of $P$, we denote by $T(F)$ the subgroup of $T^\ell$ corresponding to the unimodular subspace of $\Z^\ell$ spanned by $\vlambda(F_{i_1}),\ldots,\vlambda(F_{i_\alpha})$, where $F=F_{i_1}\cap\cdots\cap F_{i_\alpha}$.
    For each point $p\in P$, let  $F(p)$ denote the face of $P$ containing $p \in P$ in its relative interior.
    Then, we construct a manifold
    \begin{equation}\label{eq:construction of M(P,L)}
        M(P,\Lambda):=T^\ell\times P/\sim,
    \end{equation}
    where $(t,p)\sim (s,q)$ if and only if $p=q$ and $t^{-1}s\in T(F(p))$.
    Then, the standard $T^\ell$-action on $T^\ell$ induces a locally standard $T^\ell$-action on $M(P,\Lambda)$, and $M(P,\Lambda)$ is indeed a quasitoric manifold over $P$ whose characteristic function is $\vlambda$.
    Note that the two vectors $\vlambda(F_i)$ and $-\vlambda(F_i)$ determine the same circle subgroup of $T^n$. Hence, if $\Lambda'$ is a matrix obtained from $\Lambda$ by changing the signs of some columns,
    then $M(P,\Lambda')$ is equal to $M(P,\Lambda)$.

    Set $\mathfrak{F}(P)=\{F_1,\ldots,F_d\}$ and define a map $\Theta\colon\mathfrak{F}(P)\to\Z^d$ by $\Theta(F_i)=\mathbf{e}_i$, where $\mathbf{e}_i$ is the $i$th standard basis vector. Using $\Theta$, we can construct a $T^d$-manifold $\mathcal{Z}_P=T^d\times P/\sim$ as in~\eqref{eq:construction of M(P,L)}. Then the dimension of $\mathcal{Z}_P$ is equal to $d+\ell$. The $T^d$-manifold $\mathcal{Z}_P$ is referred to as a \emph{moment-angle manifold} of $P$. For instance, $\mathcal{Z}_{\Delta^\ell}$ is the $(2\ell+1)$-dimensional sphere $S^{2\ell+1}$. Note that for two simple polytopes $P$ and $Q$, we have $\mathcal{Z}_{P \times Q} = \mathcal{Z}_P \times \mathcal{Z}_Q$. Hence, $\mathcal{Z}_{\Delta^n \times \Delta^m} = S^{2n+1} \times S^{2m+1}$.
    Let us consider the map $\Z^d\to\Z^\ell$ which makes the following diagram commute
        \begin{equation*}
        \xymatrix{
         \Z^d\ar[rr]&&\Z^\ell\\
         & \mathfrak{F}(P)\ar[ul]^{\Theta}\ar[ur]_{\vlambda}&
        }.
    \end{equation*}
    Then this map can be regarded as a homomorphism defined by $\mathbf{x}\mapsto \Lambda\mathbf{x}$ for every $\mathbf{x}\in\Z^d$. Henceforth, this homomorphism is denoted by $\vlambda$ unless this is confusing.
    Let $K$ be the subtorus of $T^d$ corresponding to $\ker\vlambda$.
    Then $K$ acts freely on $\mathcal{Z}_P$, and the orbit space of $K$ on $\mathcal{Z}_P$ is the quasitoric manifold $M(P,\Lambda)$.

    Two quasitoric manifolds $M$ and $M'$ over $P$ are said to be \emph{equivalent} if there is a $\theta$-equivariant homeomorphism $f\colon M\to M'$, i.e., $f(t\cdot x)=\theta(t)\cdot f(x)$ for $t\in T^\ell$ and $x\in M$, which covers the identity map on $P$ for some automorphism $\theta$ of $T^\ell$. Thus, $M(P,\Lambda)$ and $M(P,\Lambda')$ are equivalent if there is an element $G$ in the general linear group $\GL(\ell,\Z)$ of rank $\ell$ over $\Z$ such that $\Lambda' = G \Lambda$.

    There is a well-known formula for the cohomology ring of a quasitoric manifold with $\Z$-coefficients. Let $M$ be a quasitoric manifold over $P$ with the characteristic matrix $\Lambda=(\lambda_{ij})_{1\leq i\leq \ell\atop 1\leq j\leq d}$. Then,
    \begin{equation}\label{eq:cohomology of quasitoric manifold}
        H^\ast(M(P,\Lambda))=\Z[x_1,\ldots,x_d]/\mathcal{I}_P+\mathcal{J},
    \end{equation}
    where $x_i$ is the degree-two cohomology class dual to the characteristic submanifold $M_i$, $\mathcal{I}_P$ is the homogeneous ideal generated by all square-free monomials $x_{i_1}\cdots x_{i_\alpha}$ such that $F_{i_1}\cap\cdots \cap F_{i_\alpha}$ is empty, and $\mathcal{J}$ is the ideal generated by linear forms $\lambda_{i1}x_1+\cdots+\lambda_{id}x_d$, $1\leq i\leq \ell$. Note that the second Betti number of $M$ is equal to $d-\ell$.

    Let $M$ be a quasitoric manifold with second Betti number $\beta_2=2$. Then the orbit space of $M$ is a polytope of dimension $\ell$ with $\ell+2$ facets. Hence, the orbit space is a product of two simplices $\Delta^n\times \Delta^m$ (see \cite{Gru}) for some $n$ and $m$ satisfying $n+m = \ell$. Let $\{F_{1}, \ldots, F_{n+1}\}$ and $\{F'_1, \ldots, F'_{m+1}\}$ be the sets of facets of $\Delta^n$ and $\Delta^m$, respectively. Then, each facet
    of $\Delta^n \times \Delta^m$ is either of the form $F_i \times \Delta^m$ or $\Delta^n \times F'_j$. We may assign an order to the facets of $\Delta^n\times \Delta^m$ by
    $$F_1 \times \Delta^m , \Delta^n\times F_1', F_2\times \Delta^m, \ldots, F_{n+1} \times \Delta^m, \Delta^n \times F'_2, \ldots, \Delta^n \times F'_{m+1}.$$
    Since the last  $\ell$ facets meet at a vertex, up to equivalence, we may assume that the last $\ell$ columns of the characteristic matrix $\Lambda$ corresponding to $M$ form an identity matrix. Furthermore, by the non-singularity condition \eqref{eq:non-singularity condition}, it becomes clear that
    \begin{equation}\label{eq:char mx of M}
        \Lambda=
        \begin{pmatrix}
            -1&-b_1&1& & & & & \\
            \vdots &\vdots&&\ddots & & &0& \\
             -1&-b_n&& &1& & & \\
             -a_1&-1&& & &1& & \\
             \vdots &\vdots&&0& & &\ddots& \\
             -a_m&-1&& & & & &1
        \end{pmatrix},
    \end{equation} where $1-a_jb_i=\pm 1$ for $i=1,\ldots,n$ and $j=1,\ldots,m$. See~\cite{CPS12} for more details.
 From now on, $M_{\mathbf{a},\mathbf{b}}$ denotes the quasitoric manifold $M(\Delta^n\times\Delta^m,\Lambda)$  for $\Lambda$ in~\eqref{eq:char mx of M}, where $\mathbf{a}=(a_1,\ldots,a_m)$ and $\mathbf{b}=(b_1,\ldots,b_n)$.
    By \eqref{eq:cohomology of quasitoric manifold}, the cohomology ring of $M_{\va,\vb}$ with $\Z$-coefficients is
    \begin{equation}\label{eq:cohomology of M}
        H^\ast(M_{\va,\vb})=\Z[x_1,x_2]\left/\left\langle x_1\prod_{i=1}^n(x_1+b_ix_2),~x_2\prod_{j=1}^m(a_jx_1+x_2)\right\rangle\right..
    \end{equation}

    A \emph{generalized Bott tower} of height $h$, or an \emph{$h$-stage generalized Bott tower}, is a sequence
    \begin{equation*}
        B_h \stackrel{\pi_h}\longrightarrow B_{h-1}\stackrel{\pi_{h-1}}\longrightarrow\cdots \stackrel{\pi_2}\longrightarrow B_1\stackrel{\pi_1}\longrightarrow B_0=\{\mbox{a point}\}
    \end{equation*} of manifolds $B_i=P(\CC\oplus\bigoplus_{j=1}^{n_i}\xi_{i,j})$, where $\CC$ is the trivial line bundle, $\xi_{i,j}$ is a complex line bundle over $B_{i-1}$ for each $i=1,\ldots,h$, and $P(\cdot)$ stands for the projectivization. We refer to $B_i$ as an \emph{$i$-stage generalized Bott manifold}. We remark that a $2$-stage generalized Bott manifold provided by $n=m=1$ is known as a \emph{Hirzebruch surface} \cite{DJ}. Note that $h$-stage generalized Bott manifolds are non-singular projective toric varieties with $\beta_2=h$, and are quasitoric manifolds over a product of $h$ simplices. Moreover, by \cite{ch-ma-su10a}, a quasitoric manifold over a product of simplices has a
    non-singular complete toric variety structure if and only if it is equivalent to a generalized Bott manifold. Hence, every non-singular complete toric variety
    with $\beta_2=2$ is a two-stage generalized Bott manifold.

    For simplicity, for every complex line bundle $L$ over a base $B$, the $a$-times tensor bundle of $L$ is denoted by $L^{a}$.
    If $\vb=\mathbf{0}$, then $M_{\va,\mathbf{0}}$ is equivalent to a two-stage generalized Bott manifold $P(\CC\oplus\bigoplus_{j=1}^m \gamma^{a_j})$, where $\gamma$ is a tautological line bundle over $\CP^n$. Furthermore, in \eqref{eq:cohomology of M}, the generator $x_1$ of $H^\ast(M_{\va,\mathbf{0}})$ is $-c_1(\gamma)$, the negative of the first Chern class of $\gamma$, and the generator $x_2$ of $H^\ast(M_{\va,\mathbf{0}})$ is the negative of the first Chern class of the tautological line bundle over $P(\CC\oplus\bigoplus_{j=1}^m \gamma^{a_j})$.
    On the other hand, if $\va=\mathbf{0}$, then a quasitoric manifold $M_{\mathbf{0},\vb}$ is equivalent to a two-stage generalized Bott manifold $P(\CC\oplus\bigoplus_{i=1}^n\eta^{b_i})$, where $\eta$ is a tautological line bundle over $\CP^m$, see \cite{ch-ma-su10a}. Similarly, in \eqref{eq:cohomology of M}, the generator $x_2$ of $H^\ast(M_{\mathbf{0},\vb})$ is $-c_1(\eta)$ and the generator $x_1$ of $H^\ast(M_{\mathbf{0},\vb})$ is the negative of the first Chern class of the tautological line bundle over $P(\CC\oplus \bigoplus_{i=1}^n\eta^{b_i})$.

    The following theorem provides a smooth classification of two-stage generalized Bott manifolds.
    \begin{theorem}\cite{ch-ma-su10b}\label{thm:CR of GB}
        Let $B_2:=P(\CC\oplus\bigoplus_{j=1}^m\gamma^{a_j})$ and $B_2':=P(\CC\oplus\bigoplus_{j=1}^m\gamma^{a_j'})$, where $\gamma$ denotes the tautological line bundle over $B_1=\CP^n$. The following are equivalent.
        \begin{enumerate}
            \item There exist $\epsilon=\pm1$ and $w\in\Z$ such that $$(1+\epsilon wx_1)\prod_{j=1}^m(1+\epsilon(a_j'+w)x_1)=\prod_{j=1}^m(1+a_jx_1)\in H^\ast(B_1),$$
                where $x_1=-c_1(\gamma)\in H^2(B_1)$.
            \item Two generalized Bott manifolds $B_2$ and $B_2'$ are diffeomorphic.
            \item The cohomology rings $H^\ast(B_2)$ and $H^\ast(B_2')$ are isomorphic as graded rings.
        \end{enumerate}
    \end{theorem}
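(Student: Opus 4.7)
The implication $(2)\Rightarrow(3)$ is immediate from functoriality of cohomology, and $(1)\Rightarrow(2)$ reduces to a bundle-theoretic computation on $B_1=\CP^n$, so the delicate step is $(3)\Rightarrow(1)$. I would handle them in that order.

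For $(1)\Rightarrow(2)$, I interpret condition (1) as an equality of total Chern classes of rank-$(m+1)$ sums of line bundles on $\CP^n$. Since a direct sum of line bundles on $\CP^n$ is classified (as a smooth complex vector bundle) by the multiset of first Chern classes of its summands, the identity translates into a $C^\infty$-bundle isomorphism
\[
\CC\oplus\bigoplus_{j=1}^{m}\gamma^{a_j} \;\cong\; \gamma^{w}\otimes\Bigl(\CC\oplus\bigoplus_{j=1}^{m}\gamma^{\epsilon a_j'}\Bigr),
\]
where for $\epsilon=-1$ one first replaces each $\gamma^{a_j'}$ by its complex-conjugate bundle, which does not alter the projectivization up to diffeomorphism. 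Taking $P(\cdot)$ on both sides and using $P(E\otimes L)=P(E)$ then yields a diffeomorphism $B_2\cong B_2'$.

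For $(3)\Rightarrow(1)$, let $\varphi\colon H^\ast(B_2)\to H^\ast(B_2')$ be a graded ring isomorphism, and encode its action on degree-$2$ classes by $\varphi(x_1)=px_1'+qx_2'$ and $\varphi(x_2)=rx_1'+sx_2'$ with $ps-qr=\pm1$. Applying $\varphi$ to the relation $x_1^{n+1}=0$ forces $(px_1'+qx_2')^{n+1}\equiv 0$ modulo the defining ideal of $H^\ast(B_2')$; by expanding and comparing against the two generators of that ideal, one should extract $q=0$ and hence $p,s\in\{\pm1\}$. Setting $\epsilon=p$ and letting $w$ be essentially $\pm r$, applying $\varphi$ to $x_2\prod_j(x_2+a_jx_1)=0$ and reducing the result modulo the ideal of $H^\ast(B_2')$ produces, after substitution of $x_2'=1$ in the resulting bivariate identity, precisely the univariate identity (1) (possibly with the roles of $B_2$ and $B_2'$ interchanged, which by the symmetry of the statement is harmless).

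The main obstacle is the deduction $q=0$: one must classify the degree-$2$ classes $y\in H^2(B_2')$ satisfying $y^{n+1}=0$ and show that essentially only $y=\pm x_1'$ can occur, since any other such class would lead to a highly constrained configuration of the $a_j'$. I expect this to require a case analysis on the multiset $\{a_j'\}$, with the generic case excluding alternative solutions directly via degree-by-degree coefficient comparison, and a few degenerate cases---where the two fibration structures on $B_2'$ become symmetric---requiring a separate argument that exchanges the two line-bundle summands.
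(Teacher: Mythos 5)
The paper does not actually prove this theorem: it is imported verbatim from \cite{ch-ma-su10b}, and the two technical inputs of the original argument are restated in Section~\ref{sec:two-stage GB} as Lemma~\ref{lem:total chern determine cb over CP} and Lemma~\ref{lem:two-stage generalized Bott}. Your architecture coincides with that of the original proof, but each nontrivial implication has a gap. In $(1)\Rightarrow(2)$, your justification rests on the claim that a Whitney sum of line bundles over $\CP^n$ is classified by the multiset of first Chern classes of its summands. That is false (over $\CP^1$ one has $\gamma\oplus\gamma^{-1}\cong\CC\oplus\CC$, since rank-two complex bundles over $S^2$ are determined by $c_1$), and in any case condition (1) only gives equality of \emph{total} Chern classes in the truncated ring $H^\ast(\CP^n)$, which does not determine the multiset once $m\geq n$. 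The statement you actually need is Lemma~\ref{lem:total chern determine cb over CP}: Whitney sums of line bundles of equal rank over $\CP^n$ with equal total Chern classes are isomorphic. This is a genuine theorem, not a formality; once invoked, your reduction via $P(E\otimes L)=P(E)$ and $P(\overline{E})=P(E)$ does work.

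The more serious gap is in $(3)\Rightarrow(1)$. The deduction of $q=0$ from $\varphi(x_1)^{n+1}=0$ is not merely delicate in a few symmetric degenerate cases; it fails for entire families. Besides $B_2'\cong\CP^n\times\CP^m$, it fails whenever $m=1$ and $|a_1'|\leq2$: for example, in $H^\ast(P(\CC\oplus\gamma^{2}))$ with $n$ odd one checks from $x_2'(x_2'+2x_1')=0$ that $(x_1'+x_2')^{n+1}=0$, and the paper's own Proposition~\ref{prop:realizable a=1,2} exhibits automorphisms with $\varphi(x_1)=\pm(x_1+\tfrac{2}{a}x_2)$. These exceptions are exactly the ones excluded from Lemma~\ref{lem:two-stage generalized Bott} ($m>1$ and $B_2$ not a product of projective spaces), so the whole fibre-dimension-one case --- all Hirzebruch surfaces and all $\CP^1$-bundles $P(\CC\oplus\gamma^{a_1})$ --- lies outside the reach of your coefficient comparison and requires a separate argument, as does the product case. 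Since establishing that $\varphi$ preserves $H^\ast(B_1)$ (or working around its failure) is the entire content of the hard direction, deferring it to an unspecified case analysis leaves the theorem unproved at its core. By contrast, the tail of your argument is sound: once $\varphi(x_1)=\pm x_1'$ is known, $H^\ast(B_2')$ is a free $H^\ast(\CP^n)$-module with basis $1,x_2',\dots,(x_2')^m$, so the image of the degree-$(m+1)$ relation must equal $\pm x_2'\prod_j(x_2'+a_j'x_1')$, and substituting $x_2'=1$ yields (1) with the roles of $\va$ and $\va'$ interchanged, which is indeed harmless.
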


    If neither $\va$ nor $\vb$ is a zero vector, then $M_{\va,\vb}$ cannot be equivalent to a two-stage generalized Bott manifold. Moreover, from the non-singularity condition of \eqref{eq:char mx of M}, either the nonzero entries of $\va$ are $\pm2$ and the nonzero entries of $\vb$ are $\pm1$, or the nonzero entries of $\va$ are $\pm1$ and the nonzero entries of $\vb$ are $\pm2$.

    The following theorem gives a topological classification of quasitoric manifolds with $\betti_2=2$.
    \begin{theorem}\cite{CPS12}\label{thm:main of pjm}
        Two quasitoric manifolds with second Betti number $2$ are homeomorphic if and only if their integral cohomology rings are isomorphic as graded rings.

        Furthermore, a quasitoric manifold $M$ with $\betti_2=2$ which is not equivalent to a
        generalized Bott manifold is homeomorphic to $M_{\vs,\vr}$ for some nonzero vectors $$\vs:=(\underbrace{2,\ldots,2}_s,0,\ldots,0)\in\Z^m \mbox{ and } \vr:=(\underbrace{1,\ldots,1},0,\ldots,0)\in\Z^n,$$ where $s\leq\lfloor \frac{m+1}{2}\rfloor$ and $r\leq\lfloor \frac{n+1}{2}\rfloor$.
        In particular, in the case where $n>1$ and $m>1$, all $M_{\vs,\vr}$'s are distinct and they cannot be homeomorphic to generalized Bott manifolds. In other cases, $M_{\vs,\vr}$ is homeomorphic to
        \begin{enumerate}
            \item $M_{\mathbf{0},1}=\CP^{m+1}\#\overline{\CP^{m+1}}
            $ if $n=1$ and $m$ is even;
            \item either $M_{\mathbf{0},1}$ or $M_{(2,0,\ldots,0),1}=
            \CP^{m+1}\#\CP^{m+1}$ if $n=1$ and $m$ is odd;
            \item $M_{2,\mathbf{0}}$ if $n$ is even and $m=1$; and
            \item either $M_{2,\mathbf{0}}$ or $M_{2,(1,0,\ldots,0)}$ if $n$ is odd and $m=1$,
        \end{enumerate} where $\#$ denotes an equivariant connected sum and $\overline{\CP^{m+1}}$ denotes $\CP^{m+1}$ with reversed orientation.
    \end{theorem}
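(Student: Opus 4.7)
The plan is to decompose the proof into three pieces: a normalization of the characteristic data, an invariant analysis of the cohomology ring, and an explicit construction of homeomorphisms via the connected-sum machinery of Section~\ref{sec:wps and connected sum}.

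First I would normalize $(\va,\vb)$. The constraint $1-a_jb_i=\pm 1$ in \eqref{eq:char mx of M} forces $a_jb_i\in\{0,2\}$, so for every pair $(i,j)$ with both entries nonzero one has $(|a_j|,|b_i|)\in\{(1,2),(2,1)\}$ with matching signs. Consistency across indices then forces either all nonzero $a_j$ to be $\pm 2$ with all nonzero $b_i$ being $\pm 1$, or vice versa, all of common sign within each vector. Using the $\GL(\ell,\Z)$-equivalence $\Lambda\mapsto G\Lambda$ together with the column-sign flips (neither of which changes the underlying manifold), and permuting the facets of $\Delta^n$ and $\Delta^m$, I would move all nonzero entries to be positive and push them to the initial coordinates, producing the canonical form $M_{\vs,\vr}$ with $\vs=(2,\ldots,2,0,\ldots,0)$ and $\vr=(1,\ldots,1,0,\ldots,0)$. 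When one of $\va,\vb$ is zero, Theorem~\ref{thm:CR of GB} already supplies the classification.

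Second I would extract invariants from the cohomology ring \eqref{eq:cohomology of M}. In the normalized form the defining ideal is $\langle x_1^{n-r+1}(x_1+x_2)^r,\; x_2^{m-s+1}(2x_1+x_2)^s\rangle$, and the pair $(s,r)$ can be read off from the factorization patterns of the two top-degree generators once a canonical basis of $H^2$ is chosen; here the fact that $n,m>1$ is what prevents accidental coincidences. This not only distinguishes the cohomology rings for distinct $(s,r)$ but also yields the ``only if'' direction of the theorem, since any graded ring isomorphism must preserve these factorization invariants.

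Third, to realize each cohomology-ring isomorphism by a homeomorphism I would exploit the connected-sum description of Section~\ref{sec:wps and connected sum}: recognize $M_{\vs,\vr}$ as an equivariant connected sum of weighted projective spaces $\CP^{n+1}_a$, possibly fibered appropriately. The symmetry $s\mapsto m+1-s$ that gives the bound $s\leq\lfloor(m+1)/2\rfloor$ then arises from an orientation reversal of one summand, which swaps the roles of the $s$ linear factors $2x_1+x_2$ and the $m-s+1$ linear factors $x_2$ in the relation above. The low-dimensional cases (1)--(4) follow by tracking parity: whether a summand is $\CP^{n+1}$ or $\overline{\CP^{n+1}}$ is controlled by the parity of the other dimension, producing the dichotomies in the statement.

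The main obstacle will be the symmetry $s\leftrightarrow m+1-s$. It is not visible at the level of characteristic matrices and cannot be produced by a $\GL(\ell,\Z)$ change of basis; instead one must construct an honest topological homeomorphism between $M_{\vs,\vr}$ and $M_{\vs',\vr}$ with $s+s'=m+1$. The cleanest route I would pursue is to exhibit both manifolds as the total space of a common fiber bundle, or as the same iterated equivariant connected sum after a nonobvious rearrangement, so that the symmetry becomes manifest. Once this geometric identification is in hand, combining it with the cohomological rigidity from the second step yields the full topological classification.
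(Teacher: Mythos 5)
First, a point of order: the paper does not prove this statement --- it is imported verbatim from \cite{CPS12} --- so there is no internal proof to compare against, and your proposal is in effect an attempt to reprove the main theorem of that reference. Judged on its own terms, the decisive gap is in your third step. The connected-sum machinery of Section~\ref{sec:wps and connected sum} is available only when $n=1$ or $m=1$: the identification $\Delta^n\times\Delta^1\cong\Delta^{n+1}\#\Delta^{n+1}$ used in Lemma~\ref{lem:wps connected sum} has no analogue for $n,m>1$, since a vertex connected sum of two $(n+m)$-simplices has $2(n+m)$ vertices while $\Delta^n\times\Delta^m$ has $(n+1)(m+1)$, and these agree only if $(n-1)(m-1)=0$. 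So in the main case $M_{\vs,\vr}$ is not a connected sum of weighted projective spaces, and the ``fibered or iterated connected sum'' route cannot produce the required homeomorphisms. Relatedly, the ``main obstacle'' you isolate is not actually an obstacle: the symmetry $s\mapsto m+1-s$ \emph{is} visible at the level of characteristic matrices once you allow, besides $G\in\GL(\ell,\Z)$ and column sign changes, a relabeling of the facets of $\Delta^m$ (an automorphism of the orbit polytope, which induces a weakly equivariant homeomorphism). Pivoting at a facet with $a_j=2$ and renormalizing the first $\ell$ columns to the identity turns $s$ nonzero entries into $m+1-s$ of them; this is exactly how the bound $s\leq\lfloor\frac{m+1}{2}\rfloor$ arises in \cite{CPS12}. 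In the regime $n,m>1$ the homeomorphisms therefore come from equivalences of characteristic pairs, not from surgery.

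The second substantive gap is that your step 2 asserts, rather than proves, that $(s,r)$ is recoverable from the graded ring $\Z[x_1,x_2]/\langle x_1^{n-r+1}(x_1+x_2)^r,\, x_2^{m-s+1}(2x_1+x_2)^s\rangle$ and that no such ring is isomorphic to that of a two-stage generalized Bott manifold. A graded isomorphism may send $x_1,x_2$ to an arbitrary $\GL(2,\Z)$-image of the generators, so one must classify which primitive degree-two classes admit a vanishing product with another degree-two class and which powers vanish; this analysis is the technical bulk of \cite{CPS12}, it is where the hypotheses $n,m>1$ and the normalized ranges of $s,r$ actually enter, and it also underlies the parity dichotomies (1)--(4) (e.g.\ $\CP^{m+1}\#\CP^{m+1}\cong\CP^{m+1}\#\overline{\CP^{m+1}}$ precisely when $\CP^{m+1}$ admits an orientation-reversing self-diffeomorphism, i.e.\ when $m$ is even, and similarly for $\CP^{n+1}_2$). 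Finally, your factorization invariants establish that the normalized list has no repetitions; they do not give the ``only if'' direction of the first sentence, which is trivial --- the hard direction is ``if'', and for $n,m>1$ your outline is missing its actual mechanism.
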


    Since the orbit space of a quasitoric manifold $M_{\va,\vb}$ is $\Delta^n \times \Delta^m$,
    $M_{\va,\vb}$ is a quotient of $\mathcal{Z}_{\Delta^n\times\Delta^m}=S^{2n+1}\times S^{2m+1}$ by an action of $T^2$.
    More precisely, let
    us define a free action of two-torus $K_{\va,\vb}$ on $S^{2n+1}\times S^{2m+1}$ by
    \begin{equation*}
        \begin{split}
            &(t_1,t_2)\cdot((w_1,\ldots,w_{n+1}),(z_1,\ldots,z_{m+1}))\\
            &\quad=((t_1t_2^{b_1}w_1,\ldots,t_1t_2^{b_n}w_n,t_1w_{n+1}), (t_1^{a_1}t_2z_1,\ldots,t_1^{a_m}t_2z_m,t_2z_{m+1})).
        \end{split}
    \end{equation*}
    Then the orbit space $S^{2n+1}\times S^{2m+1}/K_{\va,\vb}$ is the quasitoric manifold $M_{\va,\vb}$.

    \begin{remark} \label{rem:Aut_in_GL}
        Let $\varphi$ be a graded ring automorphism of $H^\ast(M_{\va,\vb})$. Then there is a matrix $(g_{ij})_{i,j=1,2}$ such that $g_{11}g_{22}-g_{12}g_{21}=\pm1$ and
        $$\begin{pmatrix}\varphi(x_1)\\\varphi(x_2)\end{pmatrix} = \begin{pmatrix}g_{11}&g_{12}\\g_{21}&g_{22}\end{pmatrix} \begin{pmatrix}x_1\\x_2\end{pmatrix}.$$ Hence, $\Aut(H^\ast(M_{\va,\vb}))$ can be regarded as a subgroup of $\GL(2,\Z)$.
    \end{remark}

\section{Weighted projective spaces and their connected sum}\label{sec:wps and connected sum}

    It is well-known that the quasitoric manifold $M_{1,\mathbf{0}}$ over $\Delta^n\times\Delta^1$ is the connected sum $\CP^{n+1}\#\overline{\CP^{n+1}}$, and the quasitoric manifold $M_{1,(2,0,\ldots,0)}$ is the connected sum $\CP^{n+1}\#\CP^{n+1}$. In this section, we show that quasitoric manifolds $M_{2,\mathbf{0}}$ and $M_{2,(1,0,\ldots,0)}$ over $\Delta^n\times\Delta^1$ can be expressed as equivariant connected sums of weighted projective spaces, before considering the realizability of the automorphism of $H^\ast(M_{a,\vb})$ when $a=1$ or $a=2$.

    Let us first consider the definitions and properties of weighted projective spaces.

    \begin{definition}\label{def:weighted complex projective space}
        Let $\vq=(q_0,\ldots,q_{\ell})$ be an $(\ell+1)$-tuple of positive integers, with $\gcd(q_0,\ldots,q_{\ell})=1$. The (complex) \emph{weighted projective space} of weight $\vq$, denoted by $\CP^{\ell}_\vq$, is defined as the quotient of
        $\C^{\ell+1}\setminus\{0\}$ by the weighted action of $\C^\ast$, $$\zeta\cdot(z_0,\ldots,z_{\ell}) \mapsto (\zeta^{q_0}z_0,\ldots,\zeta^{q_{\ell}}z_{\ell}).$$
        Alternatively, $\CP^\ell_\vq$ can be realized as the quotient of the unit sphere $S^{2\ell+1}\subset\C^{\ell+1}$ by the action of $S^1$, which is obtained by the restriction of the above action of $\C^\ast$ to the unit circle $S^1$.
    \end{definition}
    Note that
    if $q_0=\cdots=q_{\ell}=1$, then $\CP^\ell_\vq$ is the ordinary projective space $\CP^\ell$. The image of $(\C^\ast)^{\ell+1}\subset\C^{\ell+1}\setminus\{0\}$ in $\CP^\ell_\vq$ is the quotient $(\C^\ast)^{\ell+1}/\C^\ast$, where we regard $\C^\ast$ as the subgroup of $(\C^\ast)^{\ell+1}$ via the map $\zeta\mapsto(\zeta^{q_0},\ldots,\zeta^{q_{\ell}})$. Then, the action of $(\C^\ast)^{\ell+1}$ on $\C^{\ell+1}\setminus\{0\}$ descends to an action of $(\C^\ast)^\ell\cong(\C^\ast)^{\ell+1}/\C^\ast$ on $\CP^\ell_\vq$. Furthermore, $\CP^\ell_\vq$ is a projective toric variety which is not necessarily non-singular.

    Note that $\CP^\ell_\vq$ is equipped with an action of the $\ell$-dimensional torus $T_\vq^\ell=(S^1)^{\ell+1}/j_\vq(S^1)$, where $j_\vq\colon S^1\to (S^1)^{\ell+1}$ is the embedding defined by $j_\vq(\zeta)=(\zeta^{q_0},\ldots,\zeta^{q_{\ell}}).$ It is well-known that $\CP^\ell_\vq$ with the action of $T^\ell_\vq$ is a toric K\"{a}hler orbifold, see \cite{Gau} for more details.

    We can also consider real weighted projective spaces as follows.

    \begin{definition}
        Let $\vq:=(q_0,\ldots,q_\ell)$ be an $(\ell+1)$-tuple of integers. The \emph{real weighted projective space} $\RP^\ell_\vq$ is the quotient of the unit sphere $S^{\ell}\subset \R^{\ell+1}$ by the action of $\Z_2=\{\pm1\}$ defined by
        $$(-1)\cdot(x_0,\ldots,x_\ell)=((-1)^{q_0}x_0,\ldots,(-1)^{q_\ell}x_\ell).$$
        Hence, if all $q_i$'s are odd, then $\RP^\ell_\vq$ is the ordinary real projective space $\RP^\ell$.
    \end{definition}

    Note that the real projective space $\RP^\ell_\vq$ is the fixed set of the conjugation action on the weighted projective space $\CP^\ell_\vq$.

    As mentioned in the introduction, a quasitoric manifold is a topological generalization of a non-singular projective toric variety.
    The notion of a projective toric variety, which is not necessarily non-singular, is also topologically generalized to that of a \emph{quasitoric orbifold}. This generalization was introduced by several authors such as \cite{DJ}, \cite{HM}, and \cite{Po-Sa}.

    Suppose that $P$ is a simple polytope of dimension $\ell$ with $d$ facets $F_1,\ldots,F_d$. A function $\vlambda\colon\{F_1,\ldots,F_d\}\to\Z^\ell$ is called a \emph{rational characteristic function} if it satisfies
    \begin{equation*}
        \begin{split}
            &\vlambda(F_{i_1}),\ldots,\vlambda(F_{i_\alpha}) \text{ are linearly independent over }\Z\text{ whenever}\\
            &\text{the intersection }F_{i_1}\cap\cdots\cap F_{i_\alpha}\text{ is non-empty}.
        \end{split}
    \end{equation*} Each vector $\vlambda(F_i)$ is the \emph{rational characteristic vector} corresponding to $F_i$. Let $K$ be the subtorus of $T^d$ corresponding to the kernel of $\vlambda$. Then $K$ acts on $\mathcal{Z}_P$ with finite isotropy groups. We denote by $Q(P,\vlambda)$ the orbit space of $K$ on $\mathcal{Z}_P$  and call it the \emph{quasitoric orbifold} corresponding to $(P,\vlambda)$. If we assign an order to the set of facets of $P$, the rational characteristic function $\vlambda$ can be represented by the \emph{rational characteristic matrix} $\Lambda=\begin{pmatrix}
        \vlambda(F_1) &\cdots&\vlambda(F_d)
    \end{pmatrix}.$
    For simplicity,
    we use the notation $Q(P,\Lambda)$ instead of $Q(P,\vlambda)$ provided this does not cause confusion.

    Note that $\mathcal{Z}_{\Delta^\ell}$
    is
    $S^{2\ell+1}$ and the subtorus $K$ corresponding to the kernel of a rational characteristic function on $\Delta^\ell$ is a circle with a suitable weight. Hence, the weighted projective space $\CP^\ell_\vq$ is a quasitoric orbifold over $\Delta^\ell$.

    In particular, for a positive integer $a$, let $\vq:=(1,\ldots,1,a)\in\Z^{n+2}$. We specify $\CP^{n+1}_a:=\CP^{n+1}_\vq$ and $T^{n+1}_a:=T^{n+1}_\vq$. That is, $\CP^{n+1}_a$ is the quotient of $S^{2n+3}$ by the action of $S^1$ with the weight $(1,\ldots,1,a)$, $$\zeta\cdot (z_0,\ldots,z_{n+1})\mapsto (\zeta z_0,\ldots,\zeta z_{n},\zeta^a z_{n+1}).$$
    For each $\mathbf{z}=(z_0,\ldots,z_{n+1})$
    in $S^{2n+3}$, the isotropy group of the action of $S^1$ at $\mathbf{z}$ is the identity except if
    $\mathbf{z}=(0,\ldots,0,1)$.
    The isotropy group at $\mathbf{z}=(0,\ldots,0,1)$ is $\mu_a$, the group of the $a$th roots of $1$.
    Therefore, the
    weighted projective space $\CP_{a}^{n+1}$ has a unique singularity at the point $[0,\ldots,0,1]$, modeled on $\C^{n+1}/\mu_a$.
  Let us find the rational characteristic function $\vlambda$ corresponding to $\CP^{n+1}_a$. Note that for each $i=0,\ldots,n+1$, the sub-orbifold $Q_i$ of $\CP^{n+1}_a$ described by $z_i=0$ is fixed by the quotient of the $(i+1)$th coordinate circle of $T^{n+2}$.
    We identify $T^{n+1}_a$ and $T^{n+1}$ via the map which sends the $(i+1)$th coordinate circle of $T^{n+2}$ to the $i$th coordinate circle of $T^{n+1}$ for $i=1, \ldots, n+1$. Since $[\zeta,1,\ldots,1] = [1,\zeta^{-1},\ldots,\zeta^{-1},\zeta^{-a}]$ in $T^{n+1}_a$, the first coordinate circle of $T^{n+2}$ is identified with the circle subgroup of $T^{n+1}$ generated by $(\zeta^{-1},\ldots,\zeta^{-1},\zeta^{-a})$. Moreover, the torus $T^{n+1}$ acts on $\CP^{n+1}_a$ as follows:
    $$(t_1,\ldots,t_{n+1})\cdot[z_0,\ldots,z_{n+1}]=[z_0,t_1z_1,\ldots,t_{n+1}z_{n+1}].$$
    Then, for each $i=1,\ldots,n+1$, the sub-orbifold $Q_i$ is fixed by the $i$th coordinate circle of $T^{n+1}$, and $Q_0$ is fixed by the circle generated by $(-1,\ldots,-1,-a)$ in $\Z^{n+1} =\Hom(S^1, T^{n+1})$.
    Let us denote  by $F_i$ the facet of $\Delta^{n+1}$ corresponding to $Q_i$. Then $\vlambda(F_0)=(-1,\ldots,-1,-a)$ and $\vlambda(F_i)=\mathbf{e}_i$ for $i=1,\ldots, n$. 
    Hence, the rational characteristic matrix corresponding to $\CP^{n+1}_a$ is
    \begin{equation}\label{eq:canonical char mx for CP_a}
        \Lambda_a:= \begin{pmatrix} \vlambda(F_0) & \vlambda(F_1) & \cdots  & \vlambda(F_{n+1}) \end{pmatrix} =
        \begin{pmatrix}
            -1&1&&    &      &    \\
            -1    &    &1&      &    &    \\
            \vdots    &    &&\ddots&    &    \\
            -1    &    &&      &1&    \\
            -a    &    &&      &    &1
        \end{pmatrix}.
    \end{equation}
    In particular, a fan of $\CP_{a}^{n+1}$ as a projective toric variety is obtained by taking the cones generated by all proper subsets of
    $$\{ -\mathbf{e}_1-\cdots-\mathbf{e}_n-a\mathbf{e}_{n+1}, \mathbf{e}_1,\ldots,\mathbf{e}_{n+1}\}.$$

   On the other hand, consider $(n+1)\times(n+2)$ matrices of the form
        \begin{equation*}
            \Lambda=\begin{pmatrix}
                \pm1&\pm1&&    &      &    \\
                \pm1&    &\pm1&      &    &    \\
                \vdots&    &&\ddots&    &    \\
                \pm1&    &&      &\pm1&    \\
                \pm a&    &&      &    &\pm1
            \end{pmatrix}.
        \end{equation*}
    Then $\Lambda$ is a rational characteristic matrix on $\Delta^{n+1}$. Because a row operation of $\Lambda$ whose determinant is $\pm1$ corresponds to an automorphism of $T^{n+1}$, and changing the signs of column vectors does not affect the subgroup generated by these column vectors, it is clear that $Q(\Delta^{n+1},\Lambda)$ is equivalent to the weighted projective space $\CP^{n+1}_a$ with a suitable action of $T^{n+1}$.

Now, let us consider a smooth manifold  $\CP_a^{n+1}\#\overline{\CP_{a}^{n+1}}$ obtained by the (equivariant) connected sum of $\CP^{n+1}_a$ and $\overline{\CP^{n+1}_a}$ at their singular points.  More precisely,
   let $D$ and $D'$ be
   closed balls in $\CP_a^{n+1}$ and $\overline{\CP^{n+1}_a}$ containing the singular
    point, respectively, which is a sub-orbifold with a boundary diffeomorphic
    to $D^{2(n+1)}/\mu_a$, where $D^{2(n+1)}$ is the closed unit ball in $\C^{n+1}$.
    By deleting the interiors of the balls $D$ and $D'$ in $\CP_{a}^{n+1}$ and $\overline{\CP_{a}^{n+1}}$, respectively, and attaching the resulting punctured manifolds $\CP_{a}^{n+1}\setminus D^\circ$ and $\overline{\CP_{a}^{n+1}}\setminus {D^\prime}^\circ$ to each other by a diffeomorphism $\partial D\cong S^{2n+1}/\mu_a\cong \partial D'$,
    we
    obtain a smooth manifold $\CP_a^{n+1}\#\overline{\CP_{a}^{n+1}}$\footnote{Note that if we do a connected sum at a non-singular point, then the connected sum $\CP^{n+1}_a\#\overline{\CP^{n+1}_a}$ or $\CP^{n+1}_a\#\CP^{n+1}_a$ has still singular points. In this paper, we consider only the case that both $\CP^{n+1}_a\#\overline{\CP^{n+1}_a}$ and $\CP^{n+1}_a\#\CP^{n+1}_a$ are smooth manifolds which are the connected sums at singular points.}. This can be described in terms of toric topological language. Let us review the construction of the equivariant connected sum for quasitoric orbifolds $Q(P',\vlambda')$ and $Q(P'',\vlambda'')$. Let $v'=F_1'\cap\cdots\cap F_\ell'$ and $v''=F_1''\cap\cdots\cap F_\ell''$ be the vertices of $P'$ and $P''$, respectively, such that $\vlambda'(F_i')=\vlambda''(F_i'')$ for $i=1,\ldots,\ell$. Then the \emph{connected sum} of $P'$ and $P''$ with respect to $v'$ and $v''$ is combinatorially equivalent to the polytope formed by deleting the small balls of $v$ and $v'$ of $P$ and $P'$, respectively, and gluing together the resulting neighborhoods. The hyperplanes containg $F_i'$ and $F_i''$, respectively, must be attached to another  for $i=1,\ldots,\ell$. When the choices of $v'$ and $v''$ with the order of facets are clear, the connected sum is denoted by $P'\# P''$. Indeed, $P' \# P''$ is combinatorially equivalent to a simple polytope due to \cite{BR}. The \emph{equivariant connected sum} of $Q(P',\vlambda')$ and $Q(P'',\vlambda'')$ is the quasitoric orbifold corresponding $Q(P' \# P'' , \vlambda)$ where $\vlambda$ is a characteristic function naturally defined by $\vlambda'$ and $\vlambda''$. Indeed, $\CP_a^{n+1}\#\overline{\CP_{a}^{n+1}}$ is the equivaraint connected sum of two quasitoric orbifolds.  Near the singular points both $[0,\ldots,0,1]\in\CP_{a}^{n+1}$ and $[0,\ldots,0,1]\in \overline{\CP_{a}^{n+1}}$, they have the same singularity and the same characteristic vectors: $\mathbf{e}_1,\ldots,\mathbf{e}_n$, and $-\mathbf{e}_1-\cdots-\mathbf{e}_n-a\mathbf{e}_{n+1}$. Hence, we can carry out an equivariant connected sum $\CP^{n+1}_a\#\overline{\CP^{n+1}_a}$ by removing the singular points  as in Figure~\ref{fig:connected sum and hirzebruch}.

        \begin{figure}[h]
            \begin{center}
                \begin{tikzpicture}[scale=.8]
                    \draw (1.5,1)--(3,2)--(1.5,3)--cycle;
                    \draw (5,2)--(6.5,1)--(6.5,3)--cycle;
                    \draw (9.5,1)--(11.5,1)--(11.5,3)--(9.5,3)--cycle;
                    \draw (4,2) node{$\#$};
                    \draw (8,2) node{$=$};
                    \draw (2,2) node{$\circlearrowleft$};
                    \draw (6,2) node{$\circlearrowleft$};
                    \draw (10.5,2) node{$\circlearrowleft$};
                    \draw (2.5,3) node{\tiny{$\begin{pmatrix}-1\\-a\end{pmatrix}$}};
                    \draw (2.5,1.1) node{\tiny{$\begin{pmatrix}1\\0\end{pmatrix}$}};
                    \draw (1.1,2) node{\tiny{$\begin{pmatrix}0\\1\end{pmatrix}$}};
                    \draw (5.5,3) node{\tiny{$\begin{pmatrix}-1\\-a\end{pmatrix}$}};
                    \draw (5.5,1.1) node{\tiny{$\begin{pmatrix}1\\0\end{pmatrix}$}};
                    \draw (7,2) node{\tiny{$\begin{pmatrix}0\\-1\end{pmatrix}$}};
                    \draw (10.5,3.5) node{\tiny{$\begin{pmatrix}-1\\-a\end{pmatrix}$}};
                    \draw (10.5,0.5) node{\tiny{$\begin{pmatrix}1\\0\end{pmatrix}$}};
                    \draw (9.1,2) node{\tiny{$\begin{pmatrix}0\\1\end{pmatrix}$}};
                    \draw (12,2) node{\tiny{$\begin{pmatrix}0\\-1\end{pmatrix}$}};
                    \fill (3,2) circle(2pt);
                    \fill (5,2) circle(2pt);
                \end{tikzpicture}
            \end{center}
            \caption{Illustration of $\CP_a^2\#\overline{\CP_a^2}$} \label{fig:connected sum and hirzebruch}
        \end{figure}
        Note that the orientation of $\CP^{n+1}_a$ is associated with the orientation of $\Delta^{n+1}\subset\R^{n+1}$ and the columns of the characteristic matrix are determined up to sign. Then the characteristic matrix of $\CP^{n+1}_a\#\overline{\CP^{n+1}_a}$ is
        \begin{equation}\label{eq:char mx of connected sum}
            \begin{pmatrix}
                -1&&1&&    &      &    \\
                   -1&& &1&      &    &    \\
                    \vdots&&&&\ddots&    &    \\
                    -1&&&&      &1&    \\
                     -a&-1&&&      &    &1
            \end{pmatrix}.
        \end{equation}

    Now, consider a two-stage generalized Bott manifold $P(\CC\oplus\gamma^{ a})$, where $\gamma$ is the tautological line bundle over $\CP^n$. The relationship between the weighted projective space $\CP_{a}^{n+1}$ and the projective bundle $P(\CC\oplus\gamma^{ a})$ over $\CP^n$ is provided by the following lemma.

    \begin{lemma}\label{lem:blow-up}
        Let $a>1$. Then, a projective bundle $P(\CC\oplus\gamma^{a})$ over $\CP^n$ is the blow-up of $\CP_a^{n+1}$ at the singular point.
    \end{lemma}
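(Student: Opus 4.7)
The plan is to prove the lemma by comparing fans. Both $\CP_a^{n+1}$ and the projective bundle $P(\CC\oplus\gamma^a)$ are projective toric varieties of dimension $n+1$, and the question is purely about their fan structures in $\Z^{n+1}$.

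First, I would write down the fan of $\CP_a^{n+1}$ from \eqref{eq:canonical char mx for CP_a}: its rays are $v_0:=-\mathbf{e}_1-\cdots-\mathbf{e}_n-a\mathbf{e}_{n+1}$ and $\mathbf{e}_1,\ldots,\mathbf{e}_{n+1}$, and its maximal cones are those generated by all proper $(n+1)$-subsets of these. A direct determinant computation shows that the cone $\sigma=\mathrm{cone}(v_0,\mathbf{e}_1,\ldots,\mathbf{e}_n)$ has multiplicity $a$ while all the other maximal cones have multiplicity $1$; hence $\sigma$ is the unique singular cone of the fan and it corresponds precisely to the singular fixed point $[0,\ldots,0,1]$.

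Second, I would compute the fan of $P(\CC\oplus\gamma^a)$ by specializing the characteristic matrix \eqref{eq:char mx of M} to $m=1$, $a_1=a$, $\vb=\mathbf{0}$. The rays are then $\mathbf{e}_1,\ldots,\mathbf{e}_{n+1}$, $v_0$, and the extra ray $-\mathbf{e}_{n+1}$, and the $2(n+1)$ maximal cones are indexed by the vertices of $\Delta^n\times\Delta^1$, half containing $\mathbf{e}_{n+1}$ and half containing $-\mathbf{e}_{n+1}$. A short inspection shows that the maximal cones containing $\mathbf{e}_{n+1}$ are exactly the non-singular maximal cones of $\CP_a^{n+1}$, while the maximal cones containing $-\mathbf{e}_{n+1}$ are exactly the $n+1$ cones obtained from $\sigma$ by replacing one of $v_0,\mathbf{e}_1,\ldots,\mathbf{e}_n$ with $-\mathbf{e}_{n+1}$.

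Third, I would verify that $-\mathbf{e}_{n+1}$ lies in the relative interior of $\sigma$: the identity $-a\mathbf{e}_{n+1}=v_0+\mathbf{e}_1+\cdots+\mathbf{e}_n$ shows that $-\mathbf{e}_{n+1}$ is (a positive rational multiple of) the sum of the ray generators of $\sigma$. Therefore the fan of $P(\CC\oplus\gamma^a)$ is precisely the star subdivision of the fan of $\CP_a^{n+1}$ at the cone $\sigma$. By the standard correspondence in toric geometry between star subdivision of a maximal cone and blowing up the associated torus-fixed point, this identifies $P(\CC\oplus\gamma^a)$ with the blow-up of $\CP_a^{n+1}$ at the singular point $[0,\ldots,0,1]$, which is what we wanted. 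The main technical point — and the only place where the hypothesis $a>1$ really enters — is checking that $\sigma$ is genuinely the unique singular maximal cone, so that the single star subdivision by $-\mathbf{e}_{n+1}$ simultaneously resolves the singularity and realizes the blow-up; everything else reduces to the direct fan-level comparison above.
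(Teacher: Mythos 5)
Your proposal is correct and follows essentially the same route as the paper: the paper's proof likewise compares the fan of $P(\CC\oplus\gamma^{a})$ with that of $\CP_a^{n+1}$, identifies the cone over $\{-\mathbf{e}_1-\cdots-\mathbf{e}_n-a\mathbf{e}_{n+1},\mathbf{e}_1,\ldots,\mathbf{e}_n\}$ as the one corresponding to the singular point, and uses the identity $-a\mathbf{e}_{n+1}=(-\mathbf{e}_1-\cdots-\mathbf{e}_n-a\mathbf{e}_{n+1})+\mathbf{e}_1+\cdots+\mathbf{e}_n$ to recognize the fan of $P(\CC\oplus\gamma^{a})$ as the star subdivision realizing the blow-up. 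Your version simply spells out the multiplicity computation and the cone-by-cone matching that the paper leaves implicit.
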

    \begin{proof}
        Each one-dimensional cone in a fan of $P(\CC\oplus\gamma^{ a})$ is generated by one of the elements in $$S:=\{-\mathbf{e}_1-\cdots-\mathbf{e}_n-a\mathbf{e}_{n+1}, -\mathbf{e}_{n+1}, \mathbf{e}_1,\ldots,\mathbf{e}_{n+1}\}.$$ We obtain a fan of $\CP_{a}^{n+1}$ by taking the cones generated by all proper subsets of the set $S\setminus\{-\mathbf{e}_{n+1}\}$, see Figure~\ref{fig:hirzebruch is a blow-up}. Note that the cone generated by the set $\{-\mathbf{e}_1-\cdots-\mathbf{e}_n-a\mathbf{e}_{n+1}, \mathbf{e}_1,\ldots,\mathbf{e}_{n}\}$ corresponds to the singular point $[0,\ldots,0,1]$ of $\CP^{n+1}_a$, and $-a\mathbf{e}_{n+1}=(-\mathbf{e}_1-\cdots-\mathbf{e}_n-a\mathbf{e}_{n+1})+ \mathbf{e}_1+\cdots +\mathbf{e}_{n}$. Hence, $P(\CC\oplus\gamma^{a})$ is the blow-up of $\CP_{a}^{n+1}$ at the singular point. See Figure~\ref{fig:hirzebruch is a blow-up}.
        \begin{figure}[h]
            \begin{center}
            \begin{tikzpicture}[scale=.8]
                \foreach \x in {0,0.2,...,1}
                {
                    \draw (2.5+\x,2.5)--(2.5,2.5+\x);
                    \draw (2.5+\x,2.5)--(2.5,2.5-\x);
                    \draw (2.5,2.5+\x)--(2.5-2*\x,2.5-2*\x);
                    \draw (2.5,2.5-\x)--(2.5-2*\x,2.5-2*\x);
                }
                \draw[->,thick] (2.5,2.5)--(2.5,3.6);
                \draw[->,thick] (2.5,2.5)--(3.6,2.5);
                \draw[->,thick] (2.5,2.5)--(2.5,1.4);
                \draw[->,thick] (2.5,2.5)--(0.4,0.4);
                \draw (2.5,4) node{\tiny{$\begin{pmatrix}0\\1\end{pmatrix}$}};
                \draw (4,2.5) node{\tiny{$\begin{pmatrix}1\\0\end{pmatrix}$}};
                \draw (2.5,0.9) node{\tiny{$\begin{pmatrix}0\\-1\end{pmatrix}$}};
                \draw (-0.1,0.5) node{\tiny{$\begin{pmatrix}-1\\-a\end{pmatrix}$}};
                \draw (5,2.5) node{$\longleftarrow$};
                \foreach \x in {0,0.2,...,1}
                {
                    \draw (7.5+\x,2.5)--(7.5,2.5+\x);
                    \draw (7.5+\x,2.5)--(7.5-2*\x,2.5-2*\x);
                    \draw (7.5,2.5+\x)--(7.5-2*\x,2.5-2*\x);
                }
                \draw[->,thick] (7.5,2.5)--(7.5,3.6);
                \draw[->,thick] (7.5,2.5)--(8.6,2.5);
                \draw[->,thick] (7.5,2.5)--(5.4,0.4);
                \draw (7.5,4) node{\tiny{$\begin{pmatrix}0\\1\end{pmatrix}$}};
                \draw (9,2.5) node{\tiny{$\begin{pmatrix}1\\0\end{pmatrix}$}};
                \draw (4.9,0.5) node{\tiny{$\begin{pmatrix}-1\\-a\end{pmatrix}$}};
            \end{tikzpicture}
            \end{center}
            \caption{A Hirzebruch surface is a blow-up from $\CP^2_{(1,1,a)}$.}\label{fig:hirzebruch is a blow-up}
        \end{figure}
    \end{proof}

    \begin{lemma}\label{lem:wps connected sum}
        \begin{enumerate}
            \item For $a>0$, $M_{a,\mathbf{0}}$ is homeomorphic to $\CP^{n+1}_a\#\overline{\CP^{n+1}_a}$;
            \item Let $\vr=(\underbrace{1,\ldots,1}_r,0,\ldots,0)\in \Z^n$. Then, $M_{2,\vr}$ is homeomorphic to
                \begin{enumerate}
                    \item $\CP_2^{n+1}\#\overline{\CP_2^{n+1}}$ if $r$ is even;
                    \item $\CP_2^{n+1}\#\CP_2^{n+1}$ if $r$ is odd.
                \end{enumerate}
        \end{enumerate}
    \end{lemma}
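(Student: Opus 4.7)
My plan is to realize $\CP^{n+1}_a\#\overline{\CP^{n+1}_a}$ and $\CP^{n+1}_2\#\CP^{n+1}_2$ as quasitoric manifolds over $\Delta^n\times\Delta^1$ via equivariant connected sums at the singular vertices, and to match the resulting characteristic matrices against \eqref{eq:char mx of M}. Combinatorially, $\Delta^n\times\Delta^1$ is the vertex sum of two copies of $\Delta^{n+1}$: one identifies the $n+1$ facets through a distinguished vertex of each copy, leaving the two apex facets unglued, giving $2(n+2)-(n+1)=n+3$ facets, which matches the facet count of $\Delta^n\times\Delta^1$. Performing the connected sum at the singular vertices removes the orbifold disks $D^{2(n+1)}/\mu_a$ from each copy and glues along the lens space $S^{2n+1}/\mu_a$, so the result is smooth, as in the construction preceding the lemma.

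For part (1), I take two copies of $\CP^{n+1}_a$ with the rational characteristic matrix $\Lambda_a$ from \eqref{eq:canonical char mx for CP_a}, and identify the facets $F_0,F_1,\ldots,F_n$ of each with $F_0',\ldots,F_n'$ so that the matched characteristic vectors agree on the nose. Reading off the resulting characteristic matrix on $\Delta^n\times\Delta^1$, reordering its columns so the ``side'' facets appear first (matching the ordering used in \eqref{eq:char mx of M}), and flipping one column sign (which does not change the equivariant diffeomorphism class), I expect the matrix to coincide with $\Lambda_{a,\mathbf{0}}$. The key orientation point is that matching characteristic vectors at singular fixed points realizes the \emph{opposite}-orientation topological connected sum on one side, because the boundary orientations of the excised orbifold disks on the two copies are opposite on the lens space $S^{2n+1}/\mu_a$. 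Hence the construction yields $\CP^{n+1}_a\#\overline{\CP^{n+1}_a}$, proving (1).

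For part (2), I combine Theorem~\ref{thm:main of pjm} with a variant of this construction. Using $x_2^2\equiv-2x_1x_2$, which comes from $x_2(2x_1+x_2)=0$ in \eqref{eq:cohomology of M}, one expands $(x_1+x_2)^r$ modulo the relation ideal to get $(x_1+x_2)^r\equiv x_1^r$ when $r$ is even and $(x_1+x_2)^r\equiv x_1^r+x_1^{r-1}x_2$ when $r$ is odd. Therefore $H^\ast(M_{2,\vr})\cong H^\ast(M_{2,\mathbf{0}})$ for $r$ even and $H^\ast(M_{2,\vr})\cong H^\ast(M_{2,(1,0,\ldots,0)})$ for $r$ odd, and Theorem~\ref{thm:main of pjm} upgrades these isomorphisms to homeomorphisms. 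The even case then reduces to part (1) with $a=2$. For the odd case, I establish $M_{2,(1,0,\ldots,0)}\cong\CP^{n+1}_2\#\CP^{n+1}_2$ by running the connected sum construction of part (1) with the second copy replaced by $\overline{\CP^{n+1}_2}$: explicitly, use $\Lambda_2$ for the first copy and $M\Lambda_2^{-}$ for the second, where $\Lambda_2^{-}$ is obtained from $\Lambda_2$ by negating one row (representing $\overline{\CP^{n+1}_2}$) and $M\in\mathrm{SL}(n+1,\Z)$ is chosen so that the matched characteristic vectors on $F_0,\ldots,F_n$ agree. The same boundary-orientation convention then yields the same-orientation connected sum $\CP^{n+1}_2\#\CP^{n+1}_2$, with characteristic matrix equal to $\Lambda_{2,(1,0,\ldots,0)}$.

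The main obstacle will be the careful tracking of orientations in the equivariant connected sum: verifying that the same-matrix gluing gives $\CP^{n+1}_a\#\overline{\CP^{n+1}_a}$ while the mixed gluing gives $\CP^{n+1}_a\#\CP^{n+1}_a$ requires comparing the two natural orientations on the boundary $S^{2n+1}/\mu_a$ induced by the ambient orientations of the two copies of $\CP^{n+1}_a$. This can be carried out using the explicit description $\CP^{n+1}_a=S^{2n+3}/S^1$ with weight $(1,\ldots,1,a)$.
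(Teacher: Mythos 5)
Your proposal is correct, and for part (1) it is essentially the paper's own argument: the authors also realize $\Delta^n\times\Delta^1$ as a vertex connected sum $\Delta^{n+1}\#\Delta^{n+1}$, glue two copies of the rational characteristic matrix $\Lambda_a$ at the singular vertex, and read off the matrix of $M_{a,\mathbf{0}}$ up to column sign changes (they additionally offer a shortcut via Lemma~\ref{lem:blow-up}, identifying $P(\CC\oplus\gamma^a)$ with the blow-up of $\CP^{n+1}_a$). For part (2) your route is organized differently. The paper works with general $\vr$ at once: it splits the characteristic matrix of $M_{2,\vr}$ over $\Delta^{n+1}\#\Delta^{n+1}$ into $\Lambda_2$ and a matrix $\widetilde{\Lambda}$, factors $\widetilde{\Lambda}$ through a unimodular row operation to recognize the second piece as $\CP^{n+1}_2$, concludes that $M_{2,\vr}$ is one of the two connected sums, and then settles which one by citing the classification in \cite{CPS12}. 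You instead first reduce to $r\in\{0,1\}$ by the computation $(x_1+x_2)^2\equiv x_1^2$ together with Theorem~\ref{thm:main of pjm} (this correctly reproves the input the paper imports from \cite{CPS12}), and then propose a direct mixed-orientation gluing for $M_{2,(1,0,\ldots,0)}$; your sketch does check out (e.g.\ taking $M=I+E_{1,n+1}$ after negating the first row of $\Lambda_2$ produces exactly the columns $e_1,\ldots,e_{n+1},(-1,\ldots,-1,-2)^t,(-1,0,\ldots,0,-1)^t$). The one piece of advice: the orientation bookkeeping you flag as the main obstacle can be avoided entirely by the elimination argument implicit in the paper --- the only case where the two connected sums differ is $n$ odd, and there $M_{2,(1,0,\ldots,0)}\not\cong M_{2,\mathbf{0}}\cong\CP^{n+1}_2\#\overline{\CP^{n+1}_2}$ by part (1) and the cohomology distinction, so the splitting into two copies of $\CP^{n+1}_2$ forces $M_{2,(1,0,\ldots,0)}\cong\CP^{n+1}_2\#\CP^{n+1}_2$ without ever comparing boundary orientations on the lens space.
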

    \begin{proof}
We note that  a blow-up of $\CP^{n+1}_a$ at the singular point is indeed $\CP^{n+1}_a\#\overline{\CP^{n+1}_a}$. It follows from Lemma~\ref{lem:blow-up} that $M_{a,\mathbf{0}}$ is equivalent to $\CP^{n+1}_a\#\overline{\CP^{n+1}_a}$ by comparing their characteristic functions. Hence, statement (1) is proved.

        Now let us prove statement (2).
        Let $r$ be the number of nonzero components of $\vr$. Then, as in \eqref{eq:char mx of M}, the characteristic matrix of $M_{2,\vr}$ is
        \begin{equation*}
        \Lambda=
        \begin{pmatrix}
                -1 &-1&1&      & & &      &&    \\
                 \vdots&\vdots&&\ddots& & &      && \\
                 -1&-1&&     &1& &      & &    \\
                 -1&0&&     & &1&      & &    \\
                  \vdots&\vdots&&    & & &\ddots& &\\
                  -1&0&&    & & &      &1&     \\
                  -2&-1&&    & & &      & &1
            \end{pmatrix}
        \end{equation*}
        where the order in $\mathfrak{F}(\Delta^n \times \Delta^1)$ is
        $$ F_1 \times \Delta^1, \Delta^n \times F_1', F_2\times\Delta^1,\ldots,F_{n+1} \times \Delta^1,\Delta^n \times F_2'.
        $$
        We note that its orbit space $\Delta^n \times \Delta^1$ can be identified with $\Delta^{n+1} \# \Delta^{n+1}$. The columns of $\Lambda$ corresponding to the (ordered) subset
        $\{F_1 \times \Delta^1, \ldots, F_{n+1}\times\Delta^1,\Delta^n \times F_1'\}$ form the rational characteristic matrix $\widetilde\Lambda$ on $\Delta^{n+1}$
        $$
            \widetilde{\Lambda} = \begin{pmatrix}
                -1&1&      & & &      & &-1    \\
                 \vdots&&\ddots& & &      & &\vdots\\
                 -1&&     &1& &      & &-1    \\
                 -1&&     & &1&      & &0     \\
                 \vdots&&     & & &\ddots& &\vdots\\
                 -1&&    & & &      &1&0     \\
                 -2&&    & & &      & &-1
            \end{pmatrix}.
        $$ Moreover, the columns of $\Lambda$ corresponding to the subset $\{F_1 \times \Delta^1, \ldots, F_{n+1}\times\Delta^1,\Delta^n \times F_2'\}$ also form the rational characteristic matrix $\Lambda_2$ on $\Delta^{n+1}$
        $$
            \Lambda_2 = \begin{pmatrix}
                -1&1&      & & &      & &    \\
                 \vdots&&\ddots& & &      & &\\
                 -1&&     &1& &      & &    \\
                 -1&&     & &1&      & &     \\
                 \vdots&&     & & &\ddots& &\\
                 -1&&    & & &      &1&     \\
                 -2&&    & & &      & &1
            \end{pmatrix}.
        $$
        Note that the first $n+1$ columns of $\widetilde\Lambda$ are equal to those of $\Lambda_2$, and they have the determinant $2$. Furthermore, the characteristic matrix corresponding to $Q(\Delta^{n+1}, \widetilde{\Lambda})\#Q(\Delta^{n+1}, \Lambda_2)$ coincides with $\Lambda$.
        Hence, $M_{2,\vr}$ is equivalent to the equivariant connected sum of $Q(\Delta^{n+1}, \widetilde{\Lambda})$ and  $Q(\Delta^{n+1}, \Lambda_2)$ at the singular points. Note that
        %
        \begin{equation}\label{eq:mx multiplication}
        \widetilde{\Lambda}=\begin{pmatrix}
                1&      & & &      & &1    \\
                 &\ddots& & &      & &\vdots\\
                 &     &1& &      & &1    \\
                 &     & &1&      & &0     \\
                  &    & & &\ddots& &\vdots\\
                  &    & & &      &1&0     \\
                  &    & & &      & &1
            \end{pmatrix}\begin{pmatrix}
                 1&1&      & & &      & &0    \\
                 \vdots&&\ddots& & &      & &\vdots\\
                   1&&    &1& &      & &0    \\
                  -1&&    & &1&      & &0     \\
                  \vdots&&    & & &\ddots& &\vdots\\
                  -1&&    & & &      &1&0     \\
                  -2&&    & & &      & &-1
            \end{pmatrix}.
        \end{equation}
        Since the first matrix on the right hand side of~\eqref{eq:mx multiplication} is in $\mathrm{GL}(n+1,\Z)$,
        $Q(\Delta^{n+1},\widetilde{\Lambda})$ is equivalent to $\CP^{n+1}_2$. Therefore, $M_{2,\vr}$ is equivalent to either $\CP^{n+1}_2 \# \overline{\CP^{n+1}_2}$ or $\CP^{n+1}_2 \# \CP^{n+1}_2$.

        In particular, according to Theorem 5.5 of \cite{CPS12},
        \begin{itemize}
            \item if $n$ is even, $M_{2,\vr}=M_{2,\mathbf{0}} = \CP^{n+1}_2 \# \overline{\CP^{n+1}_2} = \CP^{n+1}_2 \# \CP^{n+1}_2$;
            \item if $n$ is odd,
            \begin{itemize}
                \item $M_{2,\vr}=M_{2,\mathbf{0}}= \CP^{n+1}_2 \# \overline{\CP^{n+1}_2}$ for even $r$, and
                \item $M_{2,\vr}=M_{2,(1,0,\ldots,0)} = \CP^{n+1}_2 \# \CP^{n+1}_2$ for odd $r$.
            \end{itemize}
         \end{itemize}
    This proves statement (2).
    \end{proof}

    Now, let us consider the cohomology ring of $\CP_a^{n+1}\#\overline{\CP_{a}^{n+1}}$.
    Letting $X:=\CP^{n+1}_a\setminus D^\circ$ and
    $Y:=\overline{\CP^{n+1}_a}\setminus {D'}^\circ$, we can see that $X\cup Y=\CP_a^{n+1}\#\overline{\CP_{a}^{n+1}}$ and $X\cap Y=\partial D=\partial D'$, where $D^\circ$ and ${D'}^\circ$ are the interiors of the closed balls $D$ and $D'$ containing the singular point in $\CP^{n+1}_a$ and $\overline{\CP^{n+1}_a}$, respectively.

    Let $\tilde{u}$ and $\tilde{v}$ be the elements of $H_{2n}(X)$ and $H_{2n}(Y)$, respectively, represented by the submanifolds $\CP^n=\{z_{n+1}=0\} \subset X$ and $\overline{\CP^n}=\{z_{n+1}=0\}\subset Y$, respectively. Thus, $\tilde{u}$ and $\tilde{v}$ are elements of $H_{2n}(\CP^{n+1}_a\#\overline{\CP^{n+1}_a})$. See  Figure~\ref{fig:connected sum}.
        \begin{figure}[h]
            \begin{center}
                \begin{tikzpicture}[scale=.8]
                    \draw (1.5,1)--(3,2)--(1.5,3)--cycle;
                    \draw (5,2)--(6.5,1)--(6.5,3)--cycle;
                    \draw (9.5,1)--(11.5,1)--(11.5,3)--(9.5,3)--cycle;
                    \draw (4,2) node{$\#$};
                    \draw (8,2) node{$=$};
                    \draw (2,2) node{$\circlearrowleft$};
                    \draw (6,2) node{$\circlearrowleft$};
                    \draw (10.5,2) node{$\circlearrowleft$};
                    \draw (2.5,3) node{\tiny{$\begin{pmatrix}-1\\-a\end{pmatrix}$}};
                    \draw (2.5,1.1) node{\tiny{$\begin{pmatrix}1\\0\end{pmatrix}$}};
                    \draw (1.1,2) node{\tiny{$\begin{pmatrix}0\\1\end{pmatrix}$}};
                    \draw (5.5,3) node{\tiny{$\begin{pmatrix}-1\\-a\end{pmatrix}$}};
                    \draw (5.5,1.1) node{\tiny{$\begin{pmatrix}1\\0\end{pmatrix}$}};
                    \draw (7,2) node{\tiny{$\begin{pmatrix}0\\-1\end{pmatrix}$}};
                    \draw (10.5,3.5) node{\tiny{$\begin{pmatrix}-1\\-a\end{pmatrix}$}};
                    \draw (10.5,0.5) node{\tiny{$\begin{pmatrix}1\\0\end{pmatrix}$}};
                    \draw (9.1,2) node{\tiny{$\begin{pmatrix}0\\1\end{pmatrix}$}};
                    \draw (12,2) node{\tiny{$\begin{pmatrix}0\\-1\end{pmatrix}$}};
                    \fill (3,2) circle(2pt);
                    \fill (5,2) circle(2pt);
                    \draw[ultra thick,red] (1.5,1)--(1.5,3);
                    \draw[ultra thick,blue] (6.5,1)--(6.5,3);
                    \draw[ultra thick,red] (9.5,1)--(9.5,3);
                    \draw[ultra thick,blue] (11.5,1)--(11.5,3);
                    \draw (0.5,0.5) node{$\tilde{u}$};
                    \draw[<->,snake=snake,segment amplitude=.4mm,segment length=2mm,line after snake=1mm] (0.7,0.75) -- (1.3,1.3);
                    \draw[<->,snake=snake,segment amplitude=.4mm,segment length=2mm,line after snake=1mm] (6.7,1.3) -- (7.3,0.75);
                    \draw (7.5,0.5) node{$\tilde{v}$};
                    \draw (8.5,0.5) node{$\tilde{u}$};
                    \draw[<->,snake=snake,segment amplitude=.4mm,segment length=2mm,line after snake=1mm] (8.7,0.75) -- (9.3,1.3);
                    \draw[<->,snake=snake,segment amplitude=.4mm,segment length=2mm,line after snake=1mm] (11.7,1.3) -- (12.3,0.75);
                    \draw (12.5,0.5) node{$\tilde{v}$};
                \end{tikzpicture}
            \end{center}
            \caption{$\tilde{u}$ and $\tilde{v}$ in $H_2(\CP_a^2\#\overline{\CP_a^2})$} \label{fig:connected sum}
        \end{figure}

    Recall the characteristic matrix~\eqref{eq:char mx of connected sum} of $\CP^{n+1}_a\#\overline{\CP^{n+1}_a}$. Then the cohomology ring is
    $$
    H^\ast(\CP^{n+1}_a\#\overline{\CP^{n+1}_a})=\Z[x_1,x_2]/\langle x_1^{n+1},x_2(ax_1 +x_2)\rangle,
    $$
    where $x_1$ and $x_2$ correspond to the first and the second columns of~\eqref{eq:char mx of connected sum}, respectively. Because $\tilde{u}$ and $\tilde{v}$ represent the characteristic submanifolds associated with $\Delta^n\times F_1'$ and $\Delta^n\times F_2'$, through the Poincar\'{e} duality, $\tilde{u}$ and $\tilde{v}$ correspond to $u=ax_1+x_2$ and $v=x_2$ in $H^2(\CP^{n+1}_a\#\overline{\CP^{n+1}_a})$, where the identities originate from $\mathcal{J}$ in~\eqref{eq:cohomology of quasitoric manifold}.

    Now assume that $ab=2$. By Lemma~\ref{lem:wps connected sum}, $\CP^{n+1}_a\#\overline{\CP^{n+1}_a} \cong M_{a,\mathbf{0}}$ and $\CP^{n+1}_a\#\CP^{n+1}_a \cong M_{a,(b,0,\ldots,0)}$. Hence, by using the cohomology formula \eqref{eq:cohomology of M}, we compute their cohomology rings as follows:
    \begin{equation*}
        \begin{split}
            H^\ast(\CP^{n+1}_a\#\overline{\CP^{n+1}_a})&=\Z[x_1,x_2]/\langle x_1^{n+1},x_2(ax_1+x_2)\rangle,\mbox{ and}\\
            H^\ast(\CP^{n+1}_a\#\CP^{n+1}_a)&=\Z[x_1,x_2]/\langle x_1^n(x_1+bx_2), x_2(ax_1+x_2)\rangle.
        \end{split}
    \end{equation*}
  Note that $u=ax_1+x_2$ and $v=x_2$ correspond to the $(n+1)$th and $(n+3)$th columns of the characteristic matrix
        \begin{equation*}
        \begin{pmatrix}
                -1 &0&1&&            &&    \\
                -1 &0&                &1&            &&    \\
                 \vdots&\vdots& &&    \ddots& &\\
                -1&0&  &&          &1&     \\
                 -a&-1& &&          & &1
            \end{pmatrix}\text{ or }
        \begin{pmatrix}
                -1 &-b&1&&            &&    \\
                -1 &0&&1&            &&    \\
                \vdots&\vdots& &&\ddots      && \\
                -1&0&  &&          &1&     \\
                -a&-1&  &&          & &1
            \end{pmatrix}.
        \end{equation*}

    \begin{proposition}\label{prop:realizable a=1,2}
        Assume $a=1$ or $a=2$. Then, the ring automorphism groups $\Aut(H^\ast(\CP^{n+1}_a\#\overline{\CP^{n+1}_a}))$ and $\Aut(H^\ast(\CP^{n+1}_a\#\CP^{n+1}_a))$ are realizable by diffeomorphisms.
    \end{proposition}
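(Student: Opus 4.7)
The plan is to realize each $\varphi \in \Aut(H^\ast)$ by composing three explicit types of self-diffeomorphisms of the connected-sum manifold, reading off the cohomological actions from the Mayer--Vietoris identification $ax_1+x_2 \leftrightarrow au$, $x_2 \leftrightarrow av$ established just before the proposition.

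I would first enumerate $\Aut(H^\ast)$ as a subgroup of $\GL(2,\Z)$: by Remark~\ref{rem:Aut_in_GL} each $\varphi$ is encoded by a matrix $(g_{ij})$, and reducing powers via $x_2^2 \equiv -ax_1x_2$, the condition $\varphi(x_1)^{n+1}=0$ in the case $\CP^{n+1}_a\#\overline{\CP^{n+1}_a}$ becomes the integer identity $(g_{11}-ag_{12})^{n+1}=g_{11}^{n+1}$, with an analogous reduction for the second relation; the parallel analysis for $\CP^{n+1}_a\#\CP^{n+1}_a$ yields similar polynomial constraints. Combined with $\det(g_{ij})=\pm 1$, these produce a finite group of order $4$ when $n$ is even and of order $8$ when $n$ is odd, the additional four elements being ``Hirzebruch-type'' involutions of the shape $x_1 \mapsto x_1 + (2/a)x_2$ (integral because $a\in\{1,2\}$).

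I would then realize these by three natural families of diffeomorphisms. Global complex conjugation on the whole of $\CP^{n+1}_a\#X$ induces the sign involution $x_i \mapsto -x_i$. The geometric swap $\tau$ of the two summands of the connected sum is a self-diffeomorphism whose cohomology action, via Mayer--Vietoris, sends $x_1 \mapsto -x_1$ and $x_2 \mapsto ax_1+x_2$ (exchanging the two linear factors of the second relation). Finally, when $n$ is odd, complex conjugation applied to only one summand $\CP^{n+1}_a$ is orientation-preserving there (since $\dim_\R \CP^{n+1}_a = 2(n+1)$ and $n+1$ is even), fixes the singular point, and therefore extends smoothly across the connecting neck $\partial D \cong S^{2n+1}/\mu_a$ into a self-diffeomorphism of the connected sum; by Mayer--Vietoris its $H^2$-action flips only the $v$-component and hence realizes the extra Hirzebruch-type automorphism $x_1 \mapsto x_1 + (2/a)x_2$, $x_2 \mapsto -x_2$. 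Composing these three operations in all orders and comparing against the enumeration from the previous step completes the realization.

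The main obstacle will be the smoothness verification for the one-sided conjugation, particularly in the case $a=2$: complex conjugation on $\CP^{n+1}_2$ is only an orbifold map, and one must check that it respects the equivariant gluing of Lemma~\ref{lem:wps connected sum} along the neck modelled on $D^{2(n+1)}/\mu_2$, so that the resulting map on the connected sum is a genuine smooth self-diffeomorphism rather than merely a continuous orbifold map. Once this smoothness is in place, together with the precise cohomological action computed from the Mayer--Vietoris sequence~\eqref{eq:M-V}, the final generation check reduces to a routine matrix comparison with the enumeration of $\Aut(H^\ast)$.
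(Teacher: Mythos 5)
Your overall strategy coincides with the paper's: enumerate $\Aut(H^\ast)$ inside $\GL(2,\Z)$ (order $4$ for $n$ even, order $8$ for $n$ odd), then realize generators by global complex conjugation, the reflection swapping the two connected summands, and, for odd $n$, one further involution. Your first two generators are exactly the paper's Type~1 and Type~3, with the same cohomological actions.

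There are, however, two genuine gaps. First, the case $n=1$ is not covered by your enumeration: when $n=1$ the relation $x_1^2=0$ lives in the same degree as $x_2(ax_1+x_2)=0$, so an automorphism need not preserve the pair $\{x_2,\,ax_1+x_2\}$ up to sign (for instance $\CP^2_2\#\overline{\CP^2_2}$ is a Hirzebruch surface diffeomorphic to $S^2\times S^2$, whose cohomology admits the factor swap), and your three families of diffeomorphisms do not generate the resulting group. The paper disposes of $n=1$ separately by quoting Wall and the classification of Hirzebruch surfaces; you need to do the same. Second, and more substantively, your realization of the extra odd-$n$ involution by ``conjugation on one summand, identity on the other'' is not well defined as stated: the two pieces must agree on the separating lens space $\partial D\cong S^{2n+1}/\mu_a$, and conjugation restricted to $\partial D$ is not the identity, so the fact that the singular point is fixed does not make the piecewise map continuous, let alone smooth. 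To repair it you must insert, on a collar of $\partial D$, an isotopy from $s|_{\partial D}$ to the identity; such an isotopy exists precisely because for $n$ odd conjugation on $\C^{n+1}$ lies in $SO(2n+2)$ and commutes with $-I$, hence descends to $S^{2n+1}/\mu_a$ for $a\le 2$ --- but supplying that isotopy is the actual content of the step, and it is absent from your argument. The paper sidesteps this differently: it pairs conjugation $s$ on one summand with the holomorphic involution $t\colon[z_0,\ldots,z_{n+1}]\mapsto[-z_0,\ldots,-z_{k-1},z_k,\ldots,z_{n+1}]$, $k=\frac{n+1}{2}$, on the other, both compatible with the equivariant connected sum at the common fixed singular point; the induced action $(u,v)\mapsto(-u,v)$ agrees, up to composing with $-I$, with the automorphism you are after.
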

    \begin{proof}
        If $n=1$, then $\CP^{2}_a\#\overline{\CP^{2}_a}$ is a Hirzebruch surface, and $\CP^2_2 \# \CP^2_2$ is diffeomorphic to $\CP^2 \# \CP^2$. According to \cite{CM} or \cite{Wall64}, all ring automorphisms on their cohomology rings are realizable by diffeomorphisms. Henceforth, let us assume that $n>1$.

        We first compute the ring automorphism groups of $H^\ast(\CP^{n+1}_a\#\overline{\CP^{n+1}_a})$ and $H^\ast(\CP^{n+1}_a\#\CP^{n+1}_a)$ as subgroups of $\GL(2,\Z)$ (see Remark~\ref{rem:Aut_in_GL}). For each case, there is only one relation $x_2(ax_1 + x_2)=0$ such that a product of two degree-two elements is zero up to scalar multiplication. Accordingly,
        an automorphism should send $\{x_2, ax_1 + x_2\}$ to $\{x_2, ax_1 + x_2\}$ up to sign. Hence, there are at most $8$ automorphisms.

       Let $u=ax_1+x_2$ and $v=x_2$. Then, we have $u^{n+1}=(-v)^{n+1}$ in $H^\ast(\CP^{n+1}_a\#\overline{\CP^{n+1}_a})$ and $u^{n+1}=(-1)^nv^{n+1}$ in $H^\ast(\CP^{n+1}_a\#\CP^{n+1}_a)$.
        If $n$ is even, neither $H^\ast(\CP^{n+1}_a\#\overline{\CP^{n+1}_a})$ nor $H^\ast(\CP^{n+1}_a\#\CP^{n+1}_a)$ has any automorphism $(u,v)\mapsto \pm(u,-v)$. Hence,
         \begin{equation*}
            \begin{split}
                &\Aut(H^\ast(\CP^{n+1}_a\#\overline{\CP^{n+1}_a}))\\
                &=\left\{ \begin{pmatrix}1&0\\0&1\end{pmatrix}, \begin{pmatrix}-1&0\\0&-1\end{pmatrix}, \begin{pmatrix}1&0\\-a&-1\end{pmatrix}, \begin{pmatrix}-1&0\\a&1\end{pmatrix}\right\}\\
                &=\Aut(H^\ast(\CP^{n+1}_a\#\CP^{n+1}_a))\\
                &\cong(\Z_2)^2.
            \end{split}
        \end{equation*}
        If $n$ is odd, then
        \begin{equation*}
        \begin{split}
            &\Aut(H^\ast(\CP^{n+1}_a\#\overline{\CP^{n+1}_a}))\\
            &= \left\{ \begin{pmatrix}1&0\\0&1\end{pmatrix}, \begin{pmatrix}-1&0\\0&-1\end{pmatrix},
            \begin{pmatrix}1&\frac{2}{a}\\0&-1\end{pmatrix}, \begin{pmatrix}-1&-\frac{2}{a}\\0&1\end{pmatrix},\right.\\
            &\qquad\left. \begin{pmatrix}-1&0\\a&1\end{pmatrix}, \begin{pmatrix}1&0\\-a&-1\end{pmatrix}, \begin{pmatrix}1&\frac{2}{a}\\-1&-1\end{pmatrix}, \begin{pmatrix}-1&-\frac{2}{a}\\a&1\end{pmatrix}\right\}\\
            &=\Aut(H^\ast(\CP^{n+1}_a\#\CP^{n+1}_a))\\
            &\cong(\Z_2)^3.
        \end{split}
        \end{equation*}

        We consider an involution $s$ on $\CP^{n+1}_a$ defined by
        \begin{equation*}
                s \colon [z_0,\ldots,z_{n+1}]\mapsto [\overline{z_0},\ldots,\overline{z_{n+1}}].
        \end{equation*}
        For odd $n$, we consider another involution $t$ defined by
        $$
                t \colon [z_0,\ldots,z_{n+1}]\mapsto [-z_0,\ldots,-z_{k-1},z_k,\ldots,z_{n+1}],
        $$
         where $k=\frac{n+1}{2}$. Observe that
        \begin{enumerate}
            \item the involution $s$ reverses the orientation of the submanifold $\CP^n=\{z_{n+1}=0\}$, and fixes the real weighted projective space $\RP_a^{n+1}$;
            \item the fixed point set of the involution $t$ is the disjoint union of $\{z_{k}=\cdots=z_{n+1}=0\}=\CP^{k-1}$ and $\{z_0=\cdots=z_{k-1}=0\}=\CP_a^{k};$
            \item the point $[0,\ldots,0,1]$ is fixed by both $s$ and $t$.
        \end{enumerate}

        Note that if $a=1$, then $[0,\ldots,0,1]$ is a smooth point. If $a=2$, both $[0,\ldots,0,1]\in \RP_a^{n+1}$ and $[0,\ldots,0,1]\in\CP_a^k$ have the same singularity, that is, $[0,\ldots,0,1]\in \RP_a^{n+1}$ is locally modeled by $\R^{n+1}/\mu_2$, and $[0,\ldots,0,1]\in \CP_a^k$ is locally modeled by $\C^k/\mu_2$.

        \textbf{Type 1.} We consider the involution $s$ on both $\CP_a^{n+1}$ and $\overline{\CP_a^{n+1}}$. Take the equivariant connected sum of $\CP_a^{n+1}$ and $\overline{\CP_a^{n+1}}$ at $[0,\ldots,0,1]$. Then the resulting involution on $\CP_a^{n+1}\#\overline{\CP_a^{n+1}}$ sends $(u,v)$ to $(-u,-v)$.

        \textbf{Type 2.} We consider the involution $s$ on $\CP_a^{n+1}$ and $t$ on $\overline{\CP_a^{n+1}}$. Take the equivariant connected sum of $\CP_a^{n+1}$ and $\overline{\CP_a^{n+1}}$ at $[0,\ldots,0,1]$. Then the resulting involution on $\CP_a^{n+1}\#\overline{\CP_a^{n+1}}$ sends $(u,v)$ to $(-u,v)$.

        \textbf{Type 3.} Let $D$ be a sub-orbifold with a boundary which is diffeomorphic to $D^{2(n+1)}/\mu_a$, where $D^{2(n+1)}$ is the closed unit ball. Then $\CP_a^{n+1}\#\overline{\CP_{a}^{n+1}}$ is obtained by deleting the interiors of sub-orbifolds $D$ containing $[0,\ldots,0,1]$ from $\CP_{a}^{n+1}$ and $\overline{\CP_{a}^{n+1}}$, respectively, and gluing together the resulting boundaries $\partial D$. Hence, $\CP_a^{n+1}\#\overline{\CP_{a}^{n+1}}$ admits a reflection about the boundary $\partial D$ which maps $\CP_a^{n+1}\setminus D$ to $\overline{\CP_{a}^{n+1}}\setminus D$. This reflection sends $(u,v)$ to $(v,u)$.

        Then Type 1 corresponds to $\begin{pmatrix}
            -1&0\\0&-1
        \end{pmatrix}$, Type 2 corresponds to $\begin{pmatrix}
            -1&-\frac{2}{a}\\0&1
        \end{pmatrix}$, and Type 3 corresponds to $\begin{pmatrix}
            -1&0\\a&1
        \end{pmatrix}$.

        Combining the diffeomorphisms of the three types above, it becomes possible to realize every element of $\Aut(H^\ast(\CP_{a}^{n+1}\#\overline{\CP_{a}^{n+1}}))$ for $a=1,2$.

        Because we can use the same process for $\CP_{a}^{n+1}\#\CP_{a}^{n+1}$, we can realize every element of $\Aut(H^\ast(\CP_{a}^{n+1}\#\CP_{a}^{n+1}))$ for $a=1,2$.
    \end{proof}

    \begin{remark}
        In general, if $a$ is odd, $[0,\ldots,0,1]\in \RP_{a}^{n+1}$ is smooth, and if $a$ is even,  $[0,\ldots,0,1]\in \RP_{a}^{n+1}$ has an isotropy group $\mu_2$. On the other hand, $[0,\ldots,0,1]\in \CP_{a}^k$ has an isotropy group $\mu_a$. Hence, Type~$2$ is only possible when $a=1$ or $2$.
    \end{remark}

    Combining Lemma \ref{lem:wps connected sum} and Proposition \ref{prop:realizable a=1,2}, we obtain the following.
    \begin{corollary}\label{cor:realizable a=1,2}
        Assume $a=1$ or $2$. Then for a quasitoric manifold $M_{a,\vb}$, every automorphism of $H^\ast(M_{a,\vb})$ is realizable by a homeomorphism.
    \end{corollary}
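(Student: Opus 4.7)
The plan is to reduce the assertion directly to Proposition~\ref{prop:realizable a=1,2}, where realizability has already been established for the connected sums $\CP^{n+1}_a\#\overline{\CP^{n+1}_a}$ and $\CP^{n+1}_a\#\CP^{n+1}_a$.

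The first step is to identify, up to homeomorphism, every admissible $M_{a,\vb}$ with $a\in\{1,2\}$ with one of these connected sums. For $a=2$, Lemma~\ref{lem:wps connected sum}(2) supplies this identification directly once a general $\vb$ is reduced to the standard form $\vr=(1,\ldots,1,0,\ldots,0)$ via Theorem~\ref{thm:main of pjm}. For $a=1$, the natural equivalence $M_{\va,\vb}\simeq M_{\vb,\va}$ coming from exchanging the two factors of $\Delta^n\times\Delta^1$ rewrites $M_{1,\vb}$ in the form treated by cases~(1)--(2) of Theorem~\ref{thm:main of pjm}, so $M_{1,\vb}$ is homeomorphic to either $\CP^{n+1}\#\overline{\CP^{n+1}}=\CP^{n+1}_1\#\overline{\CP^{n+1}_1}$ or $\CP^{n+1}\#\CP^{n+1}=\CP^{n+1}_1\#\CP^{n+1}_1$.

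With such a homeomorphism $g\colon M_{a,\vb}\to N$ in hand, where $N$ is the appropriate connected sum, any automorphism $\varphi\in\Aut(H^\ast(M_{a,\vb}))$ conjugates to $\varphi':=(g^\ast)^{-1}\circ\varphi\circ g^\ast\in\Aut(H^\ast(N))$. Proposition~\ref{prop:realizable a=1,2} then produces a diffeomorphism $h\colon N\to N$ realizing $\varphi'$, and the composite $f:=g^{-1}\circ h\circ g$ realizes $\varphi$ as a self-homeomorphism of $M_{a,\vb}$.

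Since all the substantive geometric input---the three explicit involutions on $\CP^{n+1}_a$ and the analysis of $\Aut(H^\ast(N))$ as a subgroup of $\GL(2,\Z)$---has already been carried out in Proposition~\ref{prop:realizable a=1,2}, no new hard work remains. The main subtlety is conceptual: for $\vb$ outside the standard form (and in the $a=1$ case beyond $\vb=\mathbf{0}$) the homeomorphism $g$ is supplied abstractly by the topological classification rather than constructed explicitly, which is precisely why the conclusion is realizability by a homeomorphism rather than by a diffeomorphism.
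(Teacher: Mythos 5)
Your proposal is correct and follows essentially the same route as the paper, whose entire proof is the one-line observation that the corollary follows by combining Lemma~\ref{lem:wps connected sum} with Proposition~\ref{prop:realizable a=1,2}; you merely make explicit the transport-by-conjugation step and the reduction of a general $\vb$ (including the $a=1$, $\vb\neq\mathbf{0}$ case via the factor swap and Theorem~\ref{thm:main of pjm}) that the paper leaves implicit.
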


\section{Two-stage generalized Bott manifolds}\label{sec:two-stage GB}

    In this section, we restrict our attention to two-stage generalized Bott manifolds. We show that every cohomology ring automorphism of a two-stage generalized Bott manifold is realizable by a diffeomorphism. We prepare the following two lemmas.

    \begin{lemma}\label{lem:total chern determine cb over CP}[Lemma 5.2 \cite{ch-ma-su10b}]
        Let $E$ and $E'$ be Whitney sums of complex line bundles over the complex projective space $\CP^n$ of the same dimension. If $E$ and $E'$ have the same total Chern classes, then $E$ and $E'$ are isomorphic.
    \end{lemma}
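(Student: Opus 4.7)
My plan is to exploit the classification of complex line bundles over $\CP^n$. Since $H^2(\CP^n;\Z)=\Z$, every complex line bundle is a power $\gamma^a$ of the tautological bundle $\gamma$ for a unique $a\in\Z$. Writing $E=\bigoplus_{i=1}^{k}\gamma^{a_i}$ and $E'=\bigoplus_{i=1}^{k}\gamma^{a'_i}$ and setting $x=-c_1(\gamma)\in H^{2}(\CP^n;\Z)$, the hypothesis $c(E)=c(E')$ translates into the identity
$$
\prod_{i=1}^{k}(1-a_ix)\equiv\prod_{i=1}^{k}(1-a'_ix)\pmod{x^{n+1}}
$$
in $H^{\ast}(\CP^n;\Z)=\Z[x]/(x^{n+1})$, and the goal is to deduce $E\cong E'$.

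I would split the argument according to the rank $k$. If $k\leq n$, both polynomials $\prod(1-a_ix)$ and $\prod(1-a'_ix)$ have degree at most $n$, so the congruence above is in fact an equality in $\Z[x]$. Unique factorization in $\Z[x]$ then forces the multisets $\{a_1,\dots,a_k\}$ and $\{a'_1,\dots,a'_k\}$ to coincide, and after reordering summands we get $E\cong E'$. This case is essentially formal algebra.

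The main obstacle is the complementary range $k>n$, where the top $k-n$ coefficients of $\prod(1-a_ix)$ disappear modulo $x^{n+1}$ and the exponents cannot be recovered from $c(E)$. Here I would pass to $K$-theory via the Chern character. By Newton's identities, the power sums $p_j:=\sum_{i} a_i^{\,j}$ for $1\leq j\leq n$ are $\Z$-polynomial expressions in the first $n$ elementary symmetric functions of $a_1,\dots,a_k$, which are, up to sign, the coefficients of $c(E)$ in degrees $\leq n$. Hence $c(E)=c(E')$ forces $p_j(E)=p_j(E')$ for $1\leq j\leq n$, which is exactly the statement that
$$
\mathrm{ch}(E)=\sum_{i=1}^{k}e^{-a_ix}\quad\text{and}\quad\mathrm{ch}(E')=\sum_{i=1}^{k}e^{-a'_ix}
$$
agree in $H^{\mathrm{even}}(\CP^n;\Q)=\Q[x]/(x^{n+1})$. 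Because $K^0(\CP^n)\cong\Z^{n+1}$ is torsion free and the rational Chern character is an isomorphism, this equality forces $[E]=[E']$ in $K^0(\CP^n)$, so $E$ and $E'$ are stably equivalent.

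Finally, I would invoke the stable-range classification of vector bundles. Each inclusion $BU(k+j)\hookrightarrow BU(k+j+1)$ is a fibration with fiber $S^{2(k+j)+1}$, and obstruction theory shows that the induced map $[\CP^n,BU(k+j)]\to[\CP^n,BU(k+j+1)]$ is bijective whenever the obstruction group $H^{2(k+j)+2}(\CP^n;\Z)$ vanishes, which holds once $k+j\geq n$. Since $k>n$, iterating yields $[\CP^n,BU(k)]=[\CP^n,BU]$, so stably isomorphic rank-$k$ bundles over $\CP^n$ are honestly isomorphic. Combined with the previous paragraph, this gives $E\cong E'$ and completes the proof.
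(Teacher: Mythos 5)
Your argument is correct and follows essentially the same route as the source the paper cites for this lemma (the paper gives no proof of its own, only the reference to Lemma~5.2 of \cite{ch-ma-su10b}): split on the rank, read off the exponents directly from the untruncated polynomial when the rank is at most $n$, and for larger rank pass via Newton's identities to the Chern character, use that $K^0(\CP^n)$ is torsion free to get stable isomorphism, and conclude with the stable range for $[\CP^n,BU(k)]$. One micro-point: in the low-rank case the factor $1-a_ix$ is a unit of $\Z[x]$ when $a_i=0$, so unique factorization only matches the nonzero exponents, but the assumption that $E$ and $E'$ have the same rank recovers the multiplicity of $0$ and closes this gap.
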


    \begin{lemma}\label{lem:two-stage generalized Bott}[Lemma 6.2 \cite{ch-ma-su10b}]
        Let $M=P(\CC\oplus\bigoplus_{i=1}^m\gamma^{a_i})$ and $M'=P(\CC\oplus\bigoplus_{i=1}^m\gamma^{a_i'})$ be projective bundles over the complex projective space $B=\CP^n$. Assume that $m$ is greater than $1$, then every cohomology ring isomorphism $\varphi\colon H^\ast(M)\to H^\ast(M')$ preserves the subring $H^\ast(B)$ unless $M$ is $\CP^n\times\CP^m$.
    \end{lemma}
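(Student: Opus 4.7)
The plan is to exploit the single constraint $\varphi(x_1)^{n+1}=0$ in $H^\ast(M')$, forced by $x_1^{n+1}=0$ in $H^\ast(M)$. Writing $\varphi(x_1)=\beta x_1+\gamma x_2$ with $\beta,\gamma\in\Z$ (in the spirit of Remark~\ref{rem:Aut_in_GL}, applied here to an isomorphism rather than an automorphism), the statement that $\varphi$ preserves $H^\ast(B)=\Z[x_1]/(x_1^{n+1})$ is equivalent to $\gamma=0$. The aim is therefore to show, under the assumption $M\neq\CP^n\times\CP^m$, that $\gamma=0$.

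First I would translate $\varphi(x_1)^{n+1}=0$ into ideal membership $(\beta x_1+\gamma x_2)^{n+1}\in\langle x_1^{n+1},\,x_2\prod_{j=1}^m(a_j'x_1+x_2)\rangle$. Assuming $\gamma\neq 0$, I would pass to $\Q[x_1,x_2]$ and rescale by $\gamma^{-(n+1)}$, setting $t:=\beta/\gamma$. A degree count pins down the representation to the form
\[
(tx_1+x_2)^{n+1}=c\,x_1^{n+1}+q(x_1,x_2)\cdot x_2\prod_{j=1}^m(a_j'x_1+x_2),
\]
with $c\in\Q$ and $q$ homogeneous of degree $n-m$. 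When $n<m$ no such $q$ exists, and the $x_2^{n+1}$ coefficient gives the immediate contradiction $1=0$. For $n\ge m$, I would set $x_1=1$ to reduce to a one-variable divisibility in $\Q[x_2]$, and then evaluate at the roots $0,-a_1',\ldots,-a_m'$. This yields $c=t^{n+1}$ and $(t-a_j')^{n+1}=t^{n+1}$ for each $j$, so that (assuming $t\ne 0$) each $a_j'$ has the form $t(1-\omega_j)$ for an $(n+1)$-th root of unity $\omega_j$. Since $a_j'\in\Z$, only the real choices $\omega_j\in\{1,-1\}$ can occur, giving $a_j'\in\{0,2t\}$, with $2t$ allowed only when $n+1$ is even.

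The main obstacle is ruling out the residual configuration in which $n+1$ is even, $t\neq 0$, and the $a_j'$ take values in $\{0,2t\}$. Here I would exploit the divisibility more finely by comparing multiplicities: a short derivative computation shows that $(t+x_2)^{n+1}-t^{n+1}$ vanishes to order exactly $1$ at both $x_2=0$ and $x_2=-2t$. Hence on the right-hand side the root $0$ can contribute no more multiplicity than the single $x_2$ already present (so no $a_j'$ equals $0$), and the root $-2t$ can occur with multiplicity at most $1$ (so at most one $a_j'$ equals $2t$). Combining these bounds forces $m\le 1$, contradicting the hypothesis $m>1$. Therefore $t=0$, and all $a_j'=0$, so $M'=\CP^n\times\CP^m$. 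Finally, Theorem~\ref{thm:CR of GB} promotes the cohomology isomorphism $H^\ast(M)\cong H^\ast(M')$ to a diffeomorphism $M\cong\CP^n\times\CP^m$, closing the exceptional case and completing the proof.
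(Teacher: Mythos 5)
Your argument is correct and complete. Note first that the paper itself gives no proof of this statement: it is imported verbatim as Lemma 6.2 of \cite{ch-ma-su10b}, so there is no internal proof to compare against; what you have written is a genuine self-contained reconstruction. Your strategy --- extracting everything from the single relation $\varphi(x_1)^{n+1}=0$, writing the ideal membership with homogeneous coefficients, dehomogenizing at $x_1=1$, and then evaluating at the roots $0,-a_1',\dots,-a_m'$ --- is the same basic idea as in the cited source (classifying the degree-two classes whose $(n+1)$-st power vanishes), but your execution via root-of-unity constraints and the multiplicity count on $(t+x_2)^{n+1}-t^{n+1}$ is cleaner than the usual coefficient-by-coefficient comparison. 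All the delicate points check out: the $n<m$ case dies on the $x_2^{n+1}$ coefficient; rationality of $t=\beta/\gamma$ forces $\omega_j\in\{\pm1\}$; the derivative $(n+1)(t+x_2)^n$ is nonzero at $x_2=0$ and $x_2=-2t$ when $t\neq0$, so the simple-root bounds give $m\le 1$ (this also disposes of the $n+1$ odd, $t\neq0$ subcase, where all $a_j'$ would have to be $0$); and $t=0$ forces $M'=\CP^n\times\CP^m$, whence Theorem~\ref{thm:CR of GB} identifies $M$ with $\CP^n\times\CP^m$, which is exactly the stated exception. The hypothesis $m>1$ is used precisely where it must be, since for $m=1$ the configuration $a_1'=2t$ survives and produces the extra isomorphisms seen in Proposition~\ref{prop:realizable a=1,2}.
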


    Now we can show the realizability of a cohomology ring automorphism of a two-stage generalized Bott manifold.
    \begin{proposition}\label{prop:two-stage GB}
        Let $E:=\CC\oplus\bigoplus_{j=1}^m\gamma^{a_j}$ be the Whitney sum of complex line bundles over $\CP^n$.  Then, every graded ring automorphism of $H^\ast(P(E))$ is induced by a diffeomorphism.
    \end{proposition}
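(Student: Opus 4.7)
The plan is to analyze $\mathrm{Aut}(H^\ast(P(E)))$ and realize each element by an explicit diffeomorphism. First I handle the product case $P(E) \cong \CP^n \times \CP^m$ (i.e.\ all $a_j = 0$): the automorphism group of $H^\ast$ is generated by the sign changes $x_i \mapsto -x_i$, each realized by complex conjugation on the corresponding projective factor, and (when $n = m$) by the swap $(x_1, x_2) \leftrightarrow (x_2, x_1)$, realized by interchanging the two factors. In every other case, Lemma~\ref{lem:two-stage generalized Bott} forces any automorphism $\varphi$ to preserve the subring $\pi^\ast H^\ast(\CP^n) = \Z[x_1]/\langle x_1^{n+1}\rangle$; combined with $x_1^n \neq 0$ and $x_1^{n+1} = 0$, this gives $\varphi(x_1) = \epsilon x_1$ and $\varphi(x_2) = w x_1 + \eta x_2$ for some $\epsilon,\eta \in \{\pm 1\}$ and $w \in \Z$.

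Next I translate the requirement that $\varphi$ respects the relation $x_2\prod_{j=1}^m(a_j x_1 + x_2)=0$ into a bundle isomorphism. Writing $\varphi\bigl(x_2\prod_j(a_j x_1 + x_2)\bigr)$ as a monic polynomial of degree $m+1$ in $x_2$ over $\Z[x_1]/\langle x_1^{n+1}\rangle$ and matching its coefficients against those of the original relation yields a polynomial identity in $H^\ast(\CP^n)$; via $c(\gamma^k) = 1 - kx_1$ this identity is exactly an equality of total Chern classes of two Whitney sums of $m+1$ complex line bundles on $\CP^n$. Lemma~\ref{lem:total chern determine cb over CP} then upgrades this to an honest isomorphism of the vector bundles. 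A direct calculation identifies the two bundles: when $\eta\epsilon = 1$ the isomorphism reads $E \otimes \gamma^{\eta w} \cong E$, and when $\eta\epsilon = -1$ it reads $E \otimes \gamma^{\eta w} \cong E^\vee$.

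Finally I use these isomorphisms to build the required diffeomorphisms. For $\eta = \epsilon = 1$, projectivizing $E \xrightarrow{\sim} E \otimes \gamma^{-w}$ produces a fiber-preserving diffeomorphism of $P(E)$ which, via the identification $\mathcal{O}_{P(E \otimes L)}(-1) \cong \pi^\ast L \otimes \mathcal{O}_{P(E)}(-1)$, induces $(x_1, x_2) \mapsto (x_1, w x_1 + x_2)$. For $\eta = \epsilon = -1$, I compose with the canonical anti-holomorphic involution coming from the toric structure on $P(E)$, which acts on $H^2$ by $-\mathrm{id}$. For $\eta = -\epsilon$, I exploit the fact that complex conjugation $c$ on $\CP^n$ satisfies $c^\ast\gamma \cong \gamma^{-1}$, so $c^\ast E \cong E^\vee$ as smooth complex bundles; hence $c$ lifts to a diffeomorphism $\tilde c \colon P(E) \to P(E^\vee)$ covering $c$, and composing $\tilde c$ with the projectivization of $E^\vee \cong E \otimes \gamma^{\pm w}$ gives the desired self-diffeomorphism of $P(E)$. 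The main obstacle is the bookkeeping in this last case: one has to track the transformation of $x_2$ through three successive pullbacks (the base change along $c$, the canonical identification $P(E^\vee \otimes L) = P(E^\vee)$ which shifts by $c_1(L)$, and the projectivized bundle isomorphism), each contributing an explicit multiple of $x_1$, and verify that the cumulative shift is precisely $wx_1$ with the correct sign $\eta$.
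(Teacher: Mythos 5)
Your overall strategy for the main cases --- force $\varphi$ to preserve $\pi^\ast H^\ast(\CP^n)$, convert the preserved relation into an equality of total Chern classes, upgrade it to a bundle isomorphism via Lemma~\ref{lem:total chern determine cb over CP}, and realize the resulting automorphisms by projectivized bundle maps combined with conjugation and dualization --- is exactly the paper's strategy, and your treatment of the product case and of the sign patterns $(\epsilon,\eta)$ is a correct (somewhat more elaborate) version of the paper's cases (I) and (II). However, there is a genuine gap at the first step. Lemma~\ref{lem:two-stage generalized Bott} carries the hypothesis $m>1$, so it does not apply when $m=1$, yet you invoke it in ``every other case.'' Worse, the conclusion you want from it is actually false in part of that range: for $m=1$, $a_1\in\{1,2\}$ and $n$ odd (and for Hirzebruch surfaces $n=m=1$), the group $\Aut\bigl(H^\ast(P(\CC\oplus\gamma^{a_1}))\bigr)$ contains the element $x_1\mapsto x_1+\tfrac{2}{a_1}x_2$, $x_2\mapsto -x_2$; one checks $\bigl(x_1+\tfrac{2}{a_1}x_2\bigr)^{n+1}=0$ using $x_2^2=-a_1x_1x_2$ and the parity of $n$, while the second relation is visibly preserved. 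This automorphism does not preserve $\Z[x_1]/\langle x_1^{n+1}\rangle$, so your subsequent analysis, which posits $\varphi(x_1)=\epsilon x_1$, never sees it, and none of the diffeomorphisms you construct (all of which send $x_1$ to $\pm x_1$) can realize it.

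The paper closes exactly this gap by splitting off $m=1$ before appealing to Lemma~\ref{lem:two-stage generalized Bott}: for $n=m=1$ it cites Wall's theorem on $4$-manifolds; for $m=1$ and $a_1\in\{1,2\}$ it identifies $P(\CC\oplus\gamma^{a_1})$ with the connected sum $\CP^{n+1}_{a_1}\#\overline{\CP^{n+1}_{a_1}}$ of weighted projective spaces and realizes the extra automorphism by a specially constructed involution (``Type 2'' in the proof of Proposition~\ref{prop:realizable a=1,2}), whose existence genuinely depends on $a_1\le 2$; and for $m=1$, $n>1$, $a_1>2$ it shows by an integrality argument (the putative image $\varphi(x_1)=\pm(x_1+\tfrac{2}{a_1}x_2)$ is not integral) that only the four automorphisms your framework produces exist. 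To repair your proof you must add these cases; the remainder of your argument can stand essentially as written.
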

    \begin{proof}
        Since every cohomology ring automorphism of a product of complex projective spaces is induced by a diffeomorphism \cite{Choi-Suh}, we may assume that $P(E)$ is a non-trivial fiber bundle.

        Note that $P(E)$ is a Hirzebruch surface if $n=m=1$. Each cohomology ring automorphism of a Hirzebruch surface is realizable by a diffeomorphism by \cite{CM} or \cite{Wall64}.

        If $m=1$ and $1\leq a_1\leq2$, then $P(E)$ is diffeomorphic to $\CP^{n+1}_{a_1}\#\overline{\CP^{n+1}_{a_1}}$. Hence, by Proposition \ref{prop:realizable a=1,2}, every automorphism of $H^\ast(P(E))$ is realizable by a diffeomorphism.

        Then, the remaining cases are (i) $m>1$ and (ii) $m=1$, $n>1$, and $a_1>2$. Note that
        \begin{align*}
        H^\ast(P(E)) &=
        H^\ast(\CP^n)[x_2]\left/ \left\langle x_2\prod_{j=1}^m\left(a_jx_1+x_2\right) \right\rangle  \right. \\
        &= \Z[x_1,x_2]\left/\left\langle x_1^{n+1},~x_2\prod_{j=1}^m\left(a_jx_1+x_2\right)\right\rangle\right.,
        \end{align*}
        where $x_1=-c_1(\gamma)\in H^2(\CP^n)\subset H^2(P(E))$ and $x_2\in H^2(P(E))$ is the negative of the first Chern class of the tautological line bundle over $P(E)$. We first claim that every cohomology ring automorphism of $H^\ast(P(E))$ preserves the subring $H^\ast(\CP^n)$ in each case.

        In the first case, i.e., if $m>1$, by Lemma~\ref{lem:two-stage generalized Bott}, every automorphism of $H^\ast(P(E))$ preserves the subring $H^\ast(\CP^n)$.

        Now, we consider the second case, i.e., $m=1$, $n>1$, and $a_1>2$. Let $\varphi$ be a ring automorphism of $H^\ast(P(E))$. Since $n>1$, there is only one relation $x_2(a_1x_1+x_2)=0$ such that a product of two degree-two elements is zero up to scalar multiplication. Thus, $\varphi$ should send $\{x_2,a_1x_1+x_2\}$ to $\{x_2,a_1x_1+x_2\}$ up to sign. Suppose $\varphi(a_1x_1+x_2)=\pm(a_1x_1+x_2)$ and $\varphi(x_2)=\mp x_2$. Then $\varphi(x_1)=\pm(x_1+\frac{2}{a_1}x_2)$. Because $a_1>2$, $\varphi$ cannot be an isomorphism. Therefore, there are only four automorphisms of $H^\ast(P(\CC\oplus\gamma^{a_1}))$ as follows:
        \begin{equation*}
            \Aut(H^\ast(P(E)))=\left\{
            \begin{pmatrix}1&0\\0&1\end{pmatrix}
            \begin{pmatrix}-1&0\\0&-1\end{pmatrix}
            \begin{pmatrix}1&0\\-a_1&-1\end{pmatrix}
            \begin{pmatrix}-1&0\\a_1&1\end{pmatrix}\right\}.
        \end{equation*}
        Hence, in each case, every ring automorphism of $H^\ast(P(E))$ preserves the subring $H^\ast(\CP^n)$, which proves the claim.

        Let $\varphi$ be a ring automorphism of $H^\ast(P(E))$. By the above claim, $\varphi(x_1)=\pm x_1$. Since every automorphism of $H^\ast(\CP^n)$ is induced by a diffeomorphism, we may assume that $\varphi(x_1)=x_1$.

        We write $\varphi(x_2)=\epsilon x_2+Ax_1$, where $\epsilon=\pm1$ and $A\in\Z$.
        Then the map $\varphi$ lifts to a grading preserving isomorphism $\overline{\varphi}\colon \Z[x_1,x_2]\to \Z[x_1,x_2]$ with $\overline{\varphi}(\tilde{\mathcal{J}})=\tilde{\mathcal{J}}$, where $\tilde{\mathcal{J}}\subset \Z[x_1,x_2]$ is the ideal generated by the homogeneous polynomials $x_1^{n+1}$ and $x_2\prod_{j=1}^m (a_jx_1+x_2)$.

        (I) We assume that $\varphi(x_2)=x_2+Ax_1$.
        Since $\overline{\varphi}(x_2\prod_{j=1}^m(a_jx_1+x_2)) \in \tilde{\mathcal{J}}$,
        we have
        \begin{equation}\label{eq:epsilon 1}
            (x_2+Ax_1)\prod_{j=1}^m(x_2+(A+a_j)x_1)=f(x_1,x_2)x_1^{n+1}+\alpha x_2\prod_{j=1}^m(x_2+a_j x_1),
        \end{equation}
        where $f(x_1,x_2)$ is a homogeneous polynomial of degree $m-n$ and $\alpha$ is an integer. Note that if $n\geq m$, then $f=0$. By comparing the coefficients of $x_2^{m+1}$ and $x_1x_2^m$ on both sides of~\eqref{eq:epsilon 1},
        it is clear that $A=0$. Hence, $\varphi$ is the identity that is obviously induced from the identity map of $P(E)$.

        (II) Now assume that $\varphi(x_2)=-x_2+Ax_1$.
        Since $\overline{\varphi}(x_2\prod_{j=1}^m(a_jx_1+x_2))$ belongs to $\tilde{\mathcal{J}}$, we have
        \begin{equation}\label{eq:epsilon -1}
            (-x_2+Ax_1)\prod_{j=1}^m(-x_2+(A+a_j)x_1)=f(x_1,x_2)x_1^{n+1}+\alpha x_2\prod_{j=1}^m(x_2+a_j x_1),
        \end{equation}
        where $f(x_1,x_2)$ is a homogeneous polynomial of degree $m-n$ and $\alpha$ is an integer. By comparing the coefficients of $x_2^{m+1}$ on both sides of~\eqref{eq:epsilon -1}, it follows that $\alpha=(-1)^{m+1}$.
        By substituting $x_2=1$ into \eqref{eq:epsilon -1}, we obtain
        \begin{equation}\label{eq:compare total chern}
            (1-Ax_1)(1-(A+a_1)x_1)\cdots(1-(A+a_m)x_1)=(1+a_1x_1)\cdots(1+a_mx_1)
        \end{equation} in $H^\ast(\CP^n)=\Z[x_1]/\langle x_1^{n+1}\rangle$.
        Since $E$ possesses a Hermitian metric, its dual bundle $E^\ast=\Hom(E,\C)$ is canonically isomorphic to the conjugate bundle $\CC\oplus\gamma^{-a_1}\oplus\cdots\oplus\gamma^{-a_m}$. By Lemma \ref{lem:total chern determine cb over CP}, equation \eqref{eq:compare total chern} implies that
        $$E^\ast\otimes\gamma^{-A}=\gamma^{-A}\oplus\gamma^{-A-a_1}\oplus\cdots\oplus\gamma^{-A-a_m} =\CC\oplus\gamma^{a_1}\oplus\cdots\oplus\gamma^{a_m}=E$$ as complex vector bundles over $\CP^n$.

        Let $<~\,,~>$ be a Hermitian metric on $E$. Then the map $\widetilde{h}\colon E\to E^\ast$, $u\mapsto<u,\cdot>$, induces the isomorphism $h\colon P(E)\to P(E^\ast)$ as fiber bundles.  If $y$ is the negative of the first Chern class of the tautological line bundle over $P(E^\ast)$, then $h^\ast(y)=-x_2$.

        For each $q\in \CP^n$, we choose a non-zero vector $v_q$ from the fiber of $\gamma^{-A}$ over $q$ and define a map $\widetilde{g}\colon E^\ast\to E^\ast\otimes\gamma^{-A}$ by $\widetilde{g}(u_q)=u_q\otimes v_q$, where $u_q$ is an element of the fiber of $E^\ast$ over $q$. The map $\widetilde{g}$ depends on the choice of $v_q$'s but the induced map $g\colon P(E^\ast)\to P(E^\ast\otimes\gamma^{-A})$ does not have this dependency because $\gamma^{-A}$ is a line bundle. Then the map
        $$g\colon P(E^\ast)\to P(E^\ast\otimes\gamma^{-A})=P(E)$$ preserves the complex structures on each fiber. Therefore, it induces a complex vector bundle isomorphism $T_fP(E^\ast)\to T_fP(E^\ast\otimes\gamma^{-A})$ between their tangent bundles along the fibers. According to the Borel-Hirzebruch formula, their respective total Chern classes are
        $$(1+y)(1-a_1x_1+y)\cdots(1-a_mx_1+y)$$ and $$(1-Ax_1+x_2)(1-Ax_1-a_1x_1+x_2)\cdots(1-Ax_1-a_mx_1+x_2).$$ Since $g^\ast(c_1(T_f(P(E))))=c_1(T_f(P(E^\ast)))$, we have
        $$g^\ast\left((m+1)(x_2-Ax_1)-\sum_{j=1}^ma_jx_1\right)=(m+1)y-\sum_{j=1}^ma_jx_1.$$
        Further, the map $g$ covers the identity map on $\CP^n$; thus, $g^\ast(x_2)=y+Ax_1$. Therefore,
        $$h^\ast(g^\ast(x_2))=-x_2+Ax_1=\varphi(x_2).$$

        By (I) and (II), every ring automorphism $\varphi$ is induced by a diffeomorphism.
    \end{proof}

\section{Quasitoric manifolds over $\Delta^n\times\Delta^m$}\label{sec:quasitoric over n x m}

    In this section, we show that every cohomology ring automorphism of a quasitoric manifold with second Betti number 2 is realizable by a homeomorphism. As we have seen in the previous section, every cohomology ring automorphism of a two-stage generalized Bott manifold is realizable by a diffeomorphism. Hence, we only need to consider quasitoric manifolds over $\Delta^n\times\Delta^m$ which are not equivalent to a two-stage generalized Bott manifold.

    Let $M_{\va,\vb}$ be a quasitoric manifold over $\Delta^n\times\Delta^m$.
    By Theorem~\ref{thm:main of pjm}, it is sufficient to consider the case when $\va = \vs = (2,\ldots,2,0,\ldots,0) \neq \mathbf{0}$ and $\vb=\vr=(1,\ldots,1,0,\ldots,0) \neq \mathbf{0}$.

    \begin{proposition}\label{prop:NGB}
        Let $M_{\vs,\vr}$ be a quasitoric manifold over $\Delta^n\times \Delta^m$, where two nonzero vectors $\vs$ and $\vr$ have the forms
        \begin{equation*}
            \vs:=(\underbrace{2,\ldots,2}_s,0,\ldots,0)\in\Z^m \mbox{ and } \vr:=(\underbrace{1,\ldots,1}_r,0,\ldots,0)\in\Z^n.
        \end{equation*}
        Then every element of $\Aut(H^\ast(M_{\vs,\vr}))$ is induced by a homeomorphism.
    \end{proposition}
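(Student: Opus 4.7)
My plan is to first reduce to the case $n, m \geq 2$, then determine $\Aut(H^*(M_{\vs, \vr}))$ explicitly as a subgroup of $\GL(2,\Z)$, and finally realize each automorphism by an explicit homeomorphism. When $n = 1$ or $m = 1$, Theorem~\ref{thm:main of pjm} identifies $M_{\vs, \vr}$ with $M_{\mathbf{0}, 1}$, $M_{2, \mathbf{0}}$, or $M_{2, (1, 0, \ldots, 0)}$, each of which is an equivariant connected sum of weighted projective spaces covered by Corollary~\ref{cor:realizable a=1,2}. So from here on I assume $n, m \geq 2$.

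Set $f_1 = x_1^{n-r+1}(x_1+x_2)^r$ and $f_2 = x_2^{m-s+1}(2x_1+x_2)^s$, noting that $r \leq \lfloor (n+1)/2 \rfloor$ and $s \leq \lfloor (m+1)/2 \rfloor$ imply $n-r+1 \geq r$ and $m-s+1 \geq s$. For $\varphi \in \GL(2, \Z)$ to preserve the ideal $(f_1, f_2)$, observe that $\varphi(f_1) = \varphi(x_1)^{n-r+1}\varphi(x_1+x_2)^r$ has a distinguished two-prime-power factorization in the UFD $\Z[x_1, x_2]$. I will show via a direct comparison of factorization types that a nonzero combination $a f_1 + b f_2$ with both $a, b \neq 0$ does not split as such a product with the correct multiplicities, hence $\varphi(f_1) = \pm f_1$, and similarly $\varphi(f_2) = \pm f_2$. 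This forces $\varphi$ to permute $\{x_1, x_1+x_2\}$ up to sign (with an actual swap only possible when $n = 2r-1$) and $\{x_2, 2x_1+x_2\}$ up to sign (with a swap only possible when $m = 2s-1$). A straightforward case analysis yields that $\Aut(H^*(M_{\vs, \vr}))$ is generated by $-I$, together with $\tau_w := \bigl(\begin{smallmatrix} 1 & 1 \\ 0 & -1 \end{smallmatrix}\bigr)$ when $n = 2r-1$ and $\tau_z := \bigl(\begin{smallmatrix} -1 & 0 \\ 2 & 1 \end{smallmatrix}\bigr)$ when $m = 2s-1$.

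To realize these generators I use the description $M_{\vs, \vr} = (S^{2n+1} \times S^{2m+1})/K_{\vs, \vr}$ from Section~\ref{sec:quasitoric manifolds with second betti 2}. The element $-I$ is induced by the full conjugation $(w, z) \mapsto (\bar w, \bar z)$, which normalizes $K_{\vs, \vr}$ via the torus automorphism $t \mapsto \bar{t}$. For $\tau_w$ when $n = 2r-1$, the $w$-coordinates split into two groups of size $r$ carrying $K_{\vs, \vr}$-weights $(1,1)$ and $(1,0)$ respectively. I pick a bijection $\pi$ swapping these two groups and define $\psi$ on $S^{2n+1} \times S^{2m+1}$ to perform the swap $w_i \mapsto w_{\pi(i)}$ while leaving $z_1, \ldots, z_s$ fixed and conjugating the remaining $z$-coordinates $z_{s+1}, \ldots, z_m, z_{m+1}$. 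A direct weight computation confirms that $\psi$ is equivariant for the torus automorphism $\theta(t_1, t_2) = (t_1 t_2, t_2^{-1})$ of $K_{\vs, \vr}$, so $\psi$ descends to a self-homeomorphism $\bar\psi$ of $M_{\vs, \vr}$. Tracking its effect on the facet Poincar\'e duals $y_i^w = x_1 + b_i x_2$ and $y_{n+j}^z = a_j x_1 + x_2$, the swap sends $x_1 \mapsto x_1 + x_2$ while the partial conjugation reverses the orientations of the normal bundles of exactly the $x_2$-type facets, giving $x_2 \mapsto -x_2$; thus $\bar\psi^* = \tau_w$. A symmetric construction, with $\theta'(t_1, t_2)$ interchanging the characters $(2,1)$ and $(0,1)$, realizes $\tau_z$ when $m = 2s-1$. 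Composing these with $-I$ realizes all of $\Aut(H^*(M_{\vs, \vr}))$.

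The main obstacle is the equivariance verification of the swap-plus-partial-conjugation map $\psi$: one must check that the requirement $\psi \circ k = \theta(k) \circ \psi$ is simultaneously solvable on both sphere factors, which hinges on the compatibility of the torus automorphism $\theta$ with the specific mix of $a_j \in \{0, 2\}$ and $b_i \in \{0, 1\}$, and in particular on the fact that partial conjugation on the $z$-side absorbs the sign $\bar{t_2} = t_2^{-1}$ coming from $\theta$. The secondary technical point is the factorization argument reducing $\varphi(f_i)$ to $\pm f_i$, which must rule out the degenerate possibility of a mixed $a f_1 + b f_2$ with the correct prime-power shape.
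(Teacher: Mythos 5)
Your proposal is correct and follows essentially the same route as the paper: reduce the cases $n=1$ or $m=1$ to Corollary~\ref{cor:realizable a=1,2}, and for $n,m\geq 2$ realize the generators of $\Aut(H^\ast(M_{\vs,\vr}))$ by conjugation and block-swap maps on $S^{2n+1}\times S^{2m+1}$ that normalize $K_{\vs,\vr}$ --- your $-I$, $\tau_w$ (active exactly when $n=2r-1$), and $\tau_z$ (active exactly when $m=2s-1$) are precisely the paper's $\overline{f}^\ast$, $\overline{h}^\ast$, $\overline{g}^\ast$ with the same lifts and the same torus automorphisms $\theta$. The only divergence is that you sketch the computation of $\Aut(H^\ast(M_{\vs,\vr}))$ directly via factorization in $\Z[x_1,x_2]$ (where one must allow polynomial, not just scalar, coefficients on $f_2$ when $n>m$), whereas the paper simply cites \cite{CPS12} for that group; the resulting generating sets agree.
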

    \begin{proof}
        The detailed computation of $\Aut(H^\ast(M_{\vs,\vr}))$ can be found in the proof of Theorem~6.2 in \cite{CPS12}. Even though it is one of key parts of this proof, the result is used here without detailed calculation to avoid repetition of the elementary computation.

        If $n=1$ or $m=1$,
        every automorphism of $H^\ast(M_{\vs,\vr})$ is realizable by a homeomorphism by Corollary \ref{cor:realizable a=1,2}.

        Now, assume that both $n$ and $m$ are greater than $1$.

        (I) If $s\neq\frac{m+1}{2}$ and $r\neq\frac{n+1}{2}$, then
        $$\Aut(H^\ast(M_{\vs,\vr}))=\left\{\begin{pmatrix}1&0\\0&1\end{pmatrix}, \begin{pmatrix}-1&0\\0&-1\end{pmatrix}\right\}\cong \Z_2.$$
        Define a homeomorphism $f\colon S^{2n+1}\times S^{2m+1}\to S^{2n+1}\times S^{2m+1}$ by $$((w_1,\ldots,w_{n+1}), (z_1,\ldots,z_{m+1}))\mapsto ((\overline{w_1},\ldots,\overline{w_{n+1}}), (\overline{z_1},\ldots,\overline{z_{m+1}})).$$
        Then $f$ preserves the orbits of the action of $K_{\vs,\vr}$ defined in Section~\ref{sec:quasitoric manifolds with second betti 2}. Hence, $f$ induces a homeomorphism from $M_{\vs, \vr} = S^{2n+1} \times S^{2m+1} / K_{\vs,\vr}$ to itself.
        Let $\overline{f}$ be the homeomorphism induced from $f$. Then $\overline{f}^\ast$ is represented by the matrix $\begin{pmatrix}-1&0\\0&-1\end{pmatrix}$, and hence, $\{\overline{f}^\ast\}$ generates $\Aut(M_{\vs,\vr})$.

        (II) If $s=\frac{m+1}{2}$ and $r\neq\frac{n+1}{2}$, then
        \begin{equation*}
            \begin{split}
                \Aut(H^\ast(M_{\vs,\vr}))&=\left\{\begin{pmatrix}1&0\\0&1\end{pmatrix}, \begin{pmatrix}-1&0\\0&-1\end{pmatrix}, \begin{pmatrix}1&0\\-2&-1\end{pmatrix}, \begin{pmatrix}-1&0\\2&1\end{pmatrix}\right\}\\
                &\cong \Z_2\times \Z_2.
            \end{split}
        \end{equation*}
        Define a homeomorphism $g\colon S^{2n+1}\times S^{2m+1}\to S^{2n+1}\times S^{2m+1}$ defined by
        \begin{equation*}
        \begin{split}
            &((w_1,\ldots,w_{n+1}), (z_1,\ldots,z_{m+1}))\\
            &\qquad\mapsto ((w_1,\ldots,w_r,\overline{w_{r+1}},\ldots,\overline{w_{n+1}}), (z_{s+1},\ldots,z_{m+1},z_1,\ldots,z_s)),
        \end{split}
        \end{equation*}
        and then $g$ preserves the orbits of the action of $K_{\vs,\vr}$ on $S^{2n+1} \times S^{2m+1}$.

        Let $\overline{g}$ be the homeomorphism induced from $g$. Then $\overline{g}^\ast$ is represented by the matrix $\begin{pmatrix}-1&0\\2&1\end{pmatrix}$, and hence, the set $\{\overline{f}^\ast, \overline{g}^\ast\}$ generates $\Aut(H^\ast(M_{\vs,\vr}))$.

        (III) If $s\neq\frac{m+1}{2}$ and $r=\frac{n+1}{2}$, then
        \begin{equation*}
            \begin{split}
                \Aut(H^\ast(M_{\vs,\vr}))&=\left\{\begin{pmatrix}1&0\\0&1\end{pmatrix}, \begin{pmatrix}-1&0\\0&-1\end{pmatrix}, \begin{pmatrix}-1&-1\\0&1\end{pmatrix}, \begin{pmatrix}1&1\\0&-1\end{pmatrix}\right\}\\
                &\cong \Z_2\times \Z_2.
            \end{split}
        \end{equation*}
        Define a homeomorphism $h\colon S^{2n+1}\times S^{2m+1}\to S^{2n+1}\times S^{2m+1}$ defined by
        \begin{equation*}
        \begin{split}
            &((w_1,\ldots,w_{n+1}), (z_1,\ldots,z_{m+1}))\\
            &\qquad\mapsto ((w_{r+1},\ldots,w_{n+1},w_1,\ldots,w_r), (z_1,\ldots,z_s,\overline{z_{s+1}},\ldots,\overline{z_{m+1}})).
        \end{split}
        \end{equation*}
        and then $h$ also preserves the orbits of the action of $K_{\vs,\vr}$ on $S^{2n+1} \times S^{2m+1}$.

        Let $\overline{h}$ be the homeomorphism induced from $h$. Then $\overline{h}^\ast$ is represented by a matrix $\begin{pmatrix}1&1\\0&-1\end{pmatrix}$, and hence, the set $\{\overline{f}^\ast, \overline{h}^\ast\}$ generates $\Aut(H^\ast(M_{\vs,\vr}))$.

        (IV) If $s=\frac{m+1}{2}$ and $r=\frac{n+1}{2}$, then the set $\{\overline{f}^\ast, \overline{g}^\ast, \overline{h}^\ast\}$ generates \begin{equation*}
        \begin{split}
            &\Aut(H^\ast(M_{\vs,\vr}))\\
            &=\left\{\begin{pmatrix}1&0\\0&1\end{pmatrix}, \begin{pmatrix}-1&0\\0&-1\end{pmatrix}, \begin{pmatrix}1&0\\-2&-1\end{pmatrix}, \begin{pmatrix}-1&0\\2&1\end{pmatrix},\right.\\
            &\qquad\left.\begin{pmatrix}-1&-1\\0&1\end{pmatrix}, \begin{pmatrix}1&1\\0&-1\end{pmatrix}, \begin{pmatrix} 1&1\\-2&-1\end{pmatrix}, \begin{pmatrix}-1&-1\\2&1\end{pmatrix}\right\}.
        \end{split}
        \end{equation*}
    \end{proof}

\section{Proofs of Theorems~\ref{thm:submain} and~\ref{thm:main}}\label{sec:proof of main}

    Let $M$ and $M'$ be quasitoric manifolds with second Betti number $2$. If $\varphi\colon H^\ast(M)\to H^\ast(M')$ is an isomorphism as graded rings, then there exists a homeomorphism $f\colon M\to M'$ by Theorem~\ref{thm:main of pjm}. Then $f$ induces a graded ring isomorphism $f^\ast\colon H^\ast(M')\to H^\ast(M)$. Accordingly, $f^\ast\circ\varphi$ is a ring automorphism of $H^\ast(M)$. Hence, by Proposition~\ref{prop:two-stage GB} and Proposition~\ref{prop:NGB}, there exists a homeomorphism $g\colon M\to M$ such that $g^\ast=f^\ast\circ\varphi$. Hence, $\varphi$ is realizable by a homeomorphism $g\circ f^{-1}$, as shown in the following diagram. This proves Theorem~\ref{thm:main}.
    \begin{equation*}
        \xymatrix@1{
         H^\ast(M)\ar[dd]^{\varphi}\ar[dr]^{g^\ast}&\\
         & H^\ast(M)\\
         H^\ast(M')\ar[ur]_{f^\ast}&
        }
    \end{equation*}

     Furthermore, if $M$ and $M'$ are non-singular complete toric varieties, then $f$ and $g$ are diffeomorphisms by Theorem~\ref{thm:CR of GB} and Proposition~\ref{prop:two-stage GB}. This proves Theorem~\ref{thm:submain}.

\bigskip

\end{document}